\documentclass{amsart}
\usepackage{amsmath,amssymb,amsthm, mathrsfs}
\usepackage{boxedminipage}
\usepackage{graphicx}
\usepackage[margin=1in]{geometry}
\usepackage{mathpazo}
\usepackage{fancyhdr}
\usepackage[utf8]{inputenc}
\usepackage{color}
\usepackage{tikz}

\newenvironment{customthm}[1]
  {\innercustomthm}
  {\endinnercustomthm}
\newtheorem{theorem}{Theorem}[section]
\newtheorem{lemma}[theorem]{Lemma}
\newtheorem{conjecture}[theorem]{Conjecture}

\newtheorem{varexample}[theorem]{Example}

\newenvironment{example}{\begin{varexample}
\begin{normalfont}}{\end{normalfont}
\end{varexample}}

\theoremstyle{definition}

\newtheorem{definition}[theorem]{Definition}

\definecolor{gold}{rgb}{1.0,0.75,0.0}

\title{The Gonality of Rook Graphs}

\author{Noah Speeter}

\begin{document}

\maketitle

\begin{abstract}
Two dimensional rook graphs are the Cartesian product of two complete graphs. In this paper we prove that the gonality of these graphs is the expected value of $(n-1)m$ where $n$ is the size of the smaller complete graph and $m$ is the size of the larger. furthermore we compute the 2 and 3 gonalities of these graphs. We also explore the scramble number of these graphs, which is a new graph invariant and a lower bound on the gonality.
\end{abstract}

\section{Introduction}
 
In \cite{BakerNorine} Baker and Norine describe the divisor theory of graphs, which is a discrete analogue for the divisor theory of algebraic curves. Divisor theory of graphs is often described as a chip firing game in which a player is given a graph which some number of chips on each vertex, and the player can fire a vertex by transferring one chip away from the vertex along each of its edges. The player "wins" the game if they can get to a configuration where no vertex has a negative number of chips. The gonality gon$(G)$ of a graph $G$ is a graph invariant that tells us the fewest number of chips the player would require such that one chip can be stolen and the player could still win.

The primary motivation for computing the gonality of graphs is to better understand the gonality of algebraic curves, which is the minimal degree of a nonconstant rational map from that curve to the projective line. 
Given some algebraic curve $C$, we can degenerate it to a union of lines $C'$ and then produce the dual graph of $C'$ which we will call $G$. Then the minimum gonality over all refinements of $G$ is a lower bound for the gonality of $C$. 

In this paper, we compute the gonality of two dimensional rook graphs. One motivation is that rook graphs are the dual graphs of a certain degeneration of complete intersection curves. In \cite{TW} van Dobben de Bruyn and Gijswijt raise the question of computing the gonality of $n-$dimensional cubes $Q_n$, which are examples of rook graphs. In \cite{Ralph} Aidun and Morrison show that gon$(K_n\square K_m)=(n-1)m$ if $n\leq m$ and $n\leq 5$. The main result of this paper is the full generalization of this theorem.

\begin{theorem}
\label{gonality}
If $n\leq m$, $\textrm{gon}(K_n\square K_m)=(n-1)m$.
\end{theorem}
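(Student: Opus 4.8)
The plan is to prove $\mathrm{gon}(K_n\square K_m)\le (n-1)m$ and $\mathrm{gon}(K_n\square K_m)\ge (n-1)m$ separately, with essentially all of the difficulty in the lower bound. For the upper bound I would write each vertex as a cell $(i,j)$ with $i\in\{1,\dots,n\}$, $j\in\{1,\dots,m\}$, and exhibit the divisor $D$ placing $n-1$ chips on each of the $m$ cells of row $1$, so that $\deg D=(n-1)m$. To see $r(D)\ge 1$ it suffices to reach every vertex: $D$ is already effective with a positive value on each cell of row $1$, and for a target in any other row I would fire the set $S$ consisting of all of row $1$. Each cell $(1,j)$ has exactly its $n-1$ column–edges leaving $S$, so it drops from $n-1$ to $0$, while each cell $(i,j)$ with $i\ne 1$ meets $S$ in the single vertex $(1,j)$ and rises from $0$ to $1$. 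Since set–firing preserves linear equivalence, the all–ones divisor on rows $2,\dots,n$ is equivalent to $D$ and has a chip on every remaining target, so $r(D)\ge 1$.

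The lower bound is the heart of the theorem, and the first thing to record is that the usual cheap certificates are provably too weak, which is exactly why the general case is hard. The projection $(i,j)\mapsto j$ is a degree–$n$ harmonic morphism onto $K_m$ with every edge of multiplicity $n$; pushing a positive–rank divisor forward yields a positive–rank divisor of the same degree on $n\cdot K_m$, but $\mathrm{gon}(n\cdot K_m)$ is only $m$ (the all–ones divisor is optimal there), so this reduction is tight only when $n=2$. Scramble number fares no better: once a scramble has hitting number close to $(n-1)m$, every set of $m+1$ cells must contain an egg, and as soon as $n$ exceeds $m/2$ there is an induced matching on more than $m$ cells; such a matching forces the scramble to contain an egg of size at most two, whose boundary — and hence the egg–cut — is only $2(m+n-2)-2\ll (n-1)m$. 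Thus no scramble attains order $(n-1)m$ for $n>m/2$, which is why the scramble number is treated as a separate, strictly weaker invariant.

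Consequently I would argue the lower bound directly, by induction on $n$ using reduced divisors and Dhar's burning algorithm. The base case $n=2$ is the prism, handled by the scramble whose eggs are the $m$ columns (order exactly $m$). For the inductive step, starting from a positive–rank divisor $D$ of degree $d$ on $K_n\square K_m$, I would fire a suitable row and then delete it, aiming to produce a positive–rank divisor on $K_{n-1}\square K_m$ whose degree has dropped by at least $m$; the bound $d\ge (n-1)m$ would then follow from $\mathrm{gon}(K_{n-1}\square K_m)\ge (n-2)m$. The structural reason to expect the $m$–chip drop is that each column is a clique $K_n$ and each row a clique $K_m$, so the burning fire sweeps an entire row and column almost at once, and keeping a $q$–reduced representative with a chip at $q$ should force each of the $m$ columns to retain roughly $n-1$ chips of ``resistance''. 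One checks this behaviour on the extremal divisor above: firing row $1$ and deleting a row returns precisely the all–but–one–row divisor on $K_{n-1}\square K_m$.

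The main obstacle is pinning the constant exactly. Extracting an order–of–magnitude bound $c\,nm$ from the clique structure is not hard, but forcing the coefficient to be $n-1$ — equivalently, proving that the row–deletion step loses no more than $m$ chips while genuinely preserving positive rank, and that the per–column resistances add without interference — is delicate, since firing sets in $K_n\square K_m$ interact globally across the densely connected rows. I expect the crux to be controlling this interaction, either through an invariant maintained along the burning process or through a careful choice of which row to fire and delete so that restriction does not destroy rank; this is presumably what allows the argument to pass beyond the range $n\le 5$ reached by earlier work.
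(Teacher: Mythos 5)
Your upper bound is correct (your divisor with $n-1$ chips on row $1$ is linearly equivalent, after firing row $1$, to the paper's ``all ones except one empty row'' divisor), but the lower bound --- which you yourself identify as the heart of the theorem --- is not a proof. The central step of your induction, namely that one can ``fire a suitable row and then delete it'' to obtain a positive-rank divisor on $K_{n-1}\square K_m$ of degree at least $m$ smaller, is asserted but never given a mechanism: there is no general principle by which restricting a divisor to an induced subgraph (even after a well-chosen set-firing) preserves positive rank, and no argument is offered for the claimed degree drop of $m$. You concede this yourself (``I expect the crux to be controlling this interaction\dots this is presumably what allows the argument to pass beyond $n\le 5$''), so the proposal is a plan whose decisive step is missing, not a completed argument.

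The paper's actual proof is not inductive and avoids restriction entirely: take an effective divisor $D$ of degree $(n-1)m-1$ that, among all equivalent effective divisors, \emph{maximizes the number of chips in its poorest column}. The poorest column has at most $n-2$ chips, so by Lemma \ref{poorest row} Dhar's fire started at a chipless vertex there consumes that whole column; the poorest row has at most $m-2$ chips (as $(n-1)m-1\le n(m-1)-1$), so it burns too. If a nonempty set $U$ survived, firing $U$ would leave every column meeting $U$ in $j>0$ vertices with at least $j(n-j)\ge n-1$ chips (here $j<n$ because the poorest row burned) while every column disjoint from $U$ gains $|U|$ chips --- strictly improving the poorest column and contradicting maximality. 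Hence everything burns, $D$ is $v$-reduced with $v$ chipless, and Lemma \ref{rank} kills positive rank. This extremal-choice-plus-burning argument is exactly the ingredient your sketch lacks. Separately, your side claim that no scramble attains order $(n-1)m$ once $n>m/2$ is false as stated: the paper proves $sn(K_3\square K_m)=2m$ for all $m\ge 3$, and for $K_3\square K_3$ your own bound $2(m+n-2)-2$ equals $6=(n-1)m$ rather than falling below it; the scramble obstruction is real only in an asymptotic sense (and the paper's Theorem \ref{scramble} shows the scramble number \emph{does} attain $(n-1)m$ whenever $m\ge (n-2)(n-1)$).
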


This result matches the lower bound for the gonality of complete intersection curves given by Lazarsfeld in \cite{lazarsfeld1994lectures}, where Exercise 4.12 shows the complete intersection of hypersurfaces of degrees $2\leq a_1\leq a_2\leq\cdots\leq a_{r-1}$ has gonality $d\geq (a_1-1)a_2\dots a_{r-1}$.
While Theorem \ref{gonality} is a significant result, it does not preclude the possibility of some refinement of a rook graph having smaller gonality. However in \cite{paper1} we introduce the graph invariant known as the scramble number $sn(G)$ which acts as a lower bound for gonality and has the nice property of being invariant under refinement. While there are many rook graphs whose scramble number is strictly less than the gonality, we also prove the following.

\begin{theorem}
If $m\geq(n-2)(n-1)$, $sn(K_n\square K_m)=(n-1)m$.
\label{scramble}
\end{theorem}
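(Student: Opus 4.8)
The plan is to combine the general inequality $sn(G)\le \mathrm{gon}(G)$ from \cite{paper1} with Theorem \ref{gonality}, which together give $sn(K_n\square K_m)\le (n-1)m$; it then suffices to exhibit a single scramble $\mathcal S$ on $G=K_n\square K_m$ with $sn(\mathcal S)=\min\{h(\mathcal S),e(\mathcal S)\}\ge (n-1)m$, where $h$ and $e$ denote the hitting number and the egg-cut number. I would identify the vertices of $G$ with the cells $(i,j)$ of an $n\times m$ array, so that the $m$ columns are copies of $K_n$ and the $n$ rows are copies of $K_m$, and take $\mathcal S$ to be the collection of all connected vertex sets of size exactly $n-1$. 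Every such set is connected by fiat, so $\mathcal S$ is a valid scramble, and it sits at exactly the right size to exploit a dichotomy: an egg is small (hence numerous) enough to force a large hitting number, yet just large enough to resist cheap separating cuts.

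For the egg-cut bound the main tool is the degree identity: since every vertex of $G$ has degree $n+m-2$, the number of edges leaving a set $A$ equals $(n+m-2)\lvert A\rvert-2\,e(A)$, where $e(A)$ is the number of induced edges; writing $r_i,c_j$ for the number of cells of $A$ in row $i$ and column $j$ one gets $e(A)=\sum_i\binom{r_i}{2}+\sum_j\binom{c_j}{2}$, so the cut equals $\sum_i r_i(m-r_i)+\sum_j c_j(n-c_j)$. The key lemma I would prove is that \emph{any} edge cut whose two sides each have at least $n-1$ vertices contains at least $(n-1)m$ edges. Using $e(A)\le\binom{\lvert A\rvert}{2}$ the cut is $\ge \lvert A\rvert(n+m-1-\lvert A\rvert)$, and the quadratic $\lvert A\rvert(n+m-1-\lvert A\rvert)-(n-1)m$ has roots exactly $n-1$ and $m$, so the bound holds for $n-1\le\lvert A\rvert\le m$; the range $\lvert A\rvert>m$ is handled by counting the rows and columns split by the cut, the overall minimum $(n-1)m$ being attained only by isolating a single full row or a single $(n-1)$-cell segment of a row. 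Since every egg of $\mathcal S$ has $n-1$ vertices, both sides of any egg-separating cut have at least $n-1$ vertices, so the lemma gives $e(\mathcal S)\ge(n-1)m$.

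For the hitting bound, note that a vertex set is a hitting set if and only if its complement contains no egg, so $h(\mathcal S)$ equals $nm$ minus the size of the largest egg-free set; thus it suffices to show that, when $m\ge(n-1)(n-2)$, every egg-free set — equivalently, every set whose induced subgraph has all connected components of size $\le n-2$ — has at most $m$ vertices. The structural observation that makes this tractable is that distinct components of such a set $S$ occupy pairwise disjoint sets of rows and pairwise disjoint sets of columns, since any two cells sharing a row or column are adjacent and hence lie in one component. Hence $S$ decomposes into components, the $k$-th occupying $a_k$ rows and $b_k$ columns with $\sum_k a_k\le n$, $\sum_k b_k\le m$, and each component connected with at most $n-2$ cells, forcing $a_k+b_k\le n-1$. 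Maximizing the total number of cells under these budget constraints is a finite optimization whose optimum I expect to be $\min\{\,m+(\text{column savings}),\,n(n-2)\,\}$, where the savings come from two-dimensional (L-shaped and similar) components and vanish precisely when $m\ge(n-1)(n-2)$; the optimum is $\le m$ exactly on that range, giving $h(\mathcal S)\ge nm-m=(n-1)m$.

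Combining the two estimates yields $sn(\mathcal S)\ge(n-1)m$, and with the upper bound this proves equality. I expect the hitting estimate to be the main obstacle: although the disjoint-rows-and-columns observation cleanly reduces it to a packing problem, pinning the threshold to the exact value $(n-1)(n-2)$ requires the sharp extremal analysis of how efficiently bounded-size connected components can be packed against both the row budget $n$ and the column budget $m$, and it is there that the hypothesis on $m$ is genuinely used. For $m$ just below the threshold one can pack strictly more than $m$ cells using two-dimensional components, which is exactly why this scramble stops certifying $(n-1)m$ — consistent with the paper's remark that $sn$ can drop below $\mathrm{gon}$ for smaller rook graphs.
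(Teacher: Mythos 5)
Your overall strategy coincides with the paper's: the upper bound via $sn(G)\le\mathrm{gon}(G)$ together with Theorem \ref{gonality}, the scramble whose eggs are all connected $(n-1)$-sets (the paper's $S^*_{n,m}$), and the two separate estimates on the egg-cut number and the hitting number. Your egg-cut argument is essentially the paper's proof of Theorem \ref{cutset} and is sound: since every egg has $n-1$ vertices, both sides of an egg cut have at least $n-1$ vertices; the identity $|E(A,A^c)|=(n+m-2)|A|-2e(A)$ with $e(A)\le\binom{|A|}{2}$ gives $|E(A,A^c)|\ge|A|(n+m-1-|A|)\ge(n-1)m$ for $n-1\le|A|\le m$, and the regime where both sides exceed $m$ vertices reduces, exactly as you sketch, to counting split rows and columns (the paper's cases of a full row on each side, resp.\ full rows on only one side).

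The genuine gap is the hitting-set bound, which is the heart of the theorem and the only place the hypothesis $m\ge(n-1)(n-2)$ enters. Your reduction is correct as far as it goes: distinct components of an avoidance set occupy disjoint row sets and disjoint column sets, each component has at most $n-2$ cells, and connectivity forces $a_k+b_k\le c_k+1\le n-1$. But you then merely assert the outcome of the resulting packing problem (``whose optimum I expect to be \dots'', ``the optimum is $\le m$ exactly on that range''), and that assertion is precisely what needs proof. It is not routine: the natural estimate from your constraints --- single-row components contribute at most their column count, multi-row components contribute up to $n-2$ cells each while consuming as little as one column --- gives only $\sum_k c_k\le m+t'(n-3)$, where $t'$ is the number of multi-row components, which overshoots $m$ whenever $t'\ge1$ and $n\ge4$. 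Closing this requires playing the row budget against the column budget to show that, when $m\ge(n-1)(n-2)$, multi-row components can never help; that analysis is absent from your write-up. The paper avoids the optimization altogether with a short double pigeonhole: if $|A|=m+1$, some column contains two cells of $A$; every cell of $A$ in those two rows then lies in the single component containing that adjacent pair, so those two rows hold at most $n-2$ cells of $A$; hence at least $m+1-(n-2)\ge(n-2)^2+1$ cells of $A$ lie in the remaining $n-2$ rows, so some row contains $n-1$ of them, and $n-1$ cells in one row form an egg. You should either carry out your extremal packing analysis in full or replace it by an argument of this kind; as it stands, your proposal is a plan for the decisive step rather than a proof of it.
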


This shows that, at least in cases where $m$ is sufficiently larger than $n$, the gonality of $K_n\square K_m$ is invariant under all refinements. 

In this paper, we also compute the scramble number of some three dimensional rook graphs. In section 6, we prove the following result.

\begin{theorem}
Let $2\leq n\leq m$, then $sn(K_2\square K_n\square K_m)=nm$.
\end{theorem}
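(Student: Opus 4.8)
The plan is to prove the two inequalities $sn(K_2\square K_n\square K_m)\le nm$ and $sn(K_2\square K_n\square K_m)\ge nm$ separately. For the upper bound I would use that the scramble number is a lower bound for gonality (established in \cite{paper1}), so it suffices to produce a divisor of degree $nm$ and positive rank. Writing the vertices as triples $(a,i,j)$ with $a\in\{0,1\}$, $i\in[n]$, $j\in[m]$, let $D$ place one chip on every vertex of the layer $a=0$. Firing this entire layer moves exactly one chip across each of the $nm$ edges of the $K_2$-factor, so $D$ is linearly equivalent to the all-ones divisor on the layer $a=1$; since every vertex lies in the support of one of these two effective divisors, $r(D)\ge 1$ and hence $\mathrm{gon}\le nm$. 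This is the routine half.

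The real content is the lower bound, for which I would exhibit a scramble $\mathcal S$ of order $nm$, i.e.\ one with both hitting number $h(\mathcal S)\ge nm$ and egg-cut number $e(\mathcal S)\ge nm$. Two structural facts drive the choice of eggs. First, the cut separating the two layers $a=0$ and $a=1$ consists of exactly $nm$ edges. Second, every ``line'' coming from a complete-graph factor has edge-boundary exactly $nm$: a $K_n$-line (fixed $a,j$) has $n$ vertices each contributing $1+(m-1)=m$ outgoing edges, and a $K_m$-line (fixed $a,i$) has $m$ vertices each contributing $1+(n-1)=n$ outgoing edges. These computations make $nm$ the natural target and suggest building eggs whose full containment forces a boundary of at least $nm$. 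When $n=2$ this already works cleanly with the $nm$ vertical pairs $\{(0,i,j),(1,i,j)\}$ as eggs: they are disjoint, so $h(\mathcal S)=nm$, and isolating one pair costs $2(n+m-2)=2m=nm$, so that $e(\mathcal S)=nm$ as well.

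For $n\ge 3$ the vertical-pair scramble fails, because a single pair can be separated using only $2(n+m-2)<nm$ edges, so the eggs must be enlarged to raise the egg-cut while the two-layer structure is used to keep the hitting number high. The main obstacle is exactly this tension: forcing $e(\mathcal S)\ge nm$ pushes the eggs to be large and spread out, so that any edge set separating two of them is expensive, whereas forcing $h(\mathcal S)\ge nm$ — equivalently, that no set of more than half the vertices is egg-free — pushes toward many small, nearly disjoint eggs. I expect the heart of the proof to be an isoperimetric estimate on $K_2\square K_n\square K_m$ showing that any vertex set whose edge-boundary is smaller than $nm$ is too small or too structured to contain an egg while its complement also contains one; reconciling this with the covering argument needed for $h(\mathcal S)\ge nm$, using the $K_2$-factor to link the two layers so that they cannot be avoided independently, is the step I anticipate being the most delicate.
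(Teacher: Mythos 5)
Your first half is fine: the two-layer divisor argument correctly gives $\mathrm{gon}(K_2\square K_n\square K_m)\le nm$, and combined with $sn\le \mathrm{gon}$ this is exactly how the paper obtains the upper bound. But the lower bound is the entire content of the theorem, and there your proposal stops short of a proof. For $n\ge 3$ you explicitly concede that you have no scramble and only describe the tension between the hitting number and the egg-cut number; naming the difficulty is not resolving it. Even your $n=2$ case is incomplete: exhibiting the cut that isolates a single vertical pair shows only that $e(\mathcal{S})\le 2m$, whereas the order of the scramble requires the opposite inequality, that \emph{every} egg cut has at least $2m$ edges --- and that already demands the isoperimetric analysis you defer.

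The missing idea is the choice of eggs: the paper takes $\mathcal{S}$ to consist of \emph{all connected subsets of size $n$}, and this single choice makes both bounds go through for every $n$. For the hitting number, if $|A|\le nm-1$, then among the $m$ copies of $K_2\square K_n$ the one containing the fewest vertices of $A$ contains at most $n-1$ of them; by pigeonhole one of its $n$ vertical $K_2$ fibres is disjoint from $A$, and since every vertex of that copy is adjacent to this fibre, the at least $n+1$ unhit vertices of the copy form a connected set and hence contain an egg, so $A$ is not a hitting set. For the egg-cut number, one shows that any cut $A\coprod B$ with $|A|,|B|\ge n$ has at least $nm$ edges by the same three-case analysis as Theorem \ref{cutset}: if $n\le |A|\le m$ the boundary is minimized when $A$ lies in a single $K_m$ line, giving $|A|(m+n-|A|)\ge nm$; if both sides contain a full copy of $K_2\square K_n$ one counts $2n$ edges between those copies plus one edge per remaining vertex; otherwise one counts $i(m+1)$ edges leaving the $i$ copies of $K_2\square K_n$ contained in $A$ plus at least $n$ edges inside each split copy. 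So the ``delicate step'' you anticipated is handled not by enlarging eggs ad hoc but by fixing their size at exactly $n$: small enough that the pigeonhole argument forces a missed egg in any set of $nm-1$ vertices, large enough that separating two eggs costs $nm$ edges.
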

The gonality of this family of rook graphs was known to be at most $nm$, and therefore this result also computes the gonality of all three dimensional rook graphs where the smallest dimension is 2. Further study is required to determine if any other three dimensional rook graphs have a scramble number matching its gonality.

In section 7, we explore higher gonalities of two dimensional rook graphs. There are not many families of graphs in which higher gonalities are currently known. However, for two dimensional rook graphs, we prove the following.

\begin{theorem}
Given the rook graph $K_n\square K_m$, gon$_2(K_n\square K_m)=nm-1$ and gon$_3(K_n\square K_m)=nm$.
\end{theorem}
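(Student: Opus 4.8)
The plan is to derive both upper bounds from the single claim that the all-ones divisor $D_0$, which places exactly one chip on each of the $nm$ vertices, satisfies $r(D_0)\ge 3$. Since $\deg D_0 = nm$ this gives $\mathrm{gon}_3(K_n\square K_m)\le nm$ immediately, and since removing a chip lowers the rank by at most one, $D_0-v$ has degree $nm-1$ and rank $r(D_0-v)\ge r(D_0)-1\ge 2$, giving $\mathrm{gon}_2(K_n\square K_m)\le nm-1$. To prove $r(D_0)\ge 3$ I would check that $D_0-E$ is equivalent to an effective divisor for every effective $E$ of degree $3$, organized by the multiplicities in $E$. If $E$ is supported on three distinct vertices then $D_0-E$ is already effective. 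If $E=2a+b$ with $a,b$ lying in a common row or column, I would fire the complement of that line, so that every vertex of the line gains $n-1$ (resp.\ $m-1$) chips; if instead $a,b$ are non-adjacent I would fire the complement of $\{a\}$. Finally if $E=3a$ I would fire the complement of $\{a\}$ once. In each case a one-line computation shows every coordinate remains $\ge 0$, using only $n+m\ge 4$; for instance in the last case $a$ ends with $n+m-4\ge 0$ chips and its neighbors with $0$.

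For the lower bounds I must show that no divisor of degree $\le nm-2$ has rank $\ge 2$ and no divisor of degree $\le nm-1$ has rank $\ge 3$. Because rank is monotone under domination (if $D\le D'$ coordinatewise then $r(D)\le r(D')$, by adding $D'-D\ge 0$ to an effective representative of $D-E$), it suffices to treat the extremal degrees: I would prove that (A) every effective divisor of degree exactly $nm-1$ has rank $\le 2$, and (B) every effective divisor of degree exactly $nm-2$ has rank $\le 1$. Concretely, given such a $D$ I must exhibit an effective divisor $E$, of degree $3$ in case (A) and degree $2$ in case (B), for which $D-E$ is not equivalent to any effective divisor, and then certify this non-equivalence with Dhar's burning algorithm by producing a vertex whose reduced value is negative.

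The guiding observation is that a divisor of degree $nm-k$ is ``$k$ chips short of full'': a short count shows that any effective representative has at least $k$ empty vertices, and the positions of these empty vertices relative to the row- and column-cliques dictate the choice of $E$. The model configuration is $D=D_0-a-b$ with $a,b$ in distinct rows and columns, where for case (B) I would take $E=a+b$, and for case (A), with $D=D_0-a$, I would take $E=a+2b$ for some $b$ non-adjacent to $a$. Both choices reduce to one clean sub-lemma: \emph{for non-adjacent $a,b$, the divisor $D_0-2a-2b$ is not equivalent to an effective divisor}. I would prove this by reducing $D_0-2a-2b$ toward a well-chosen vertex and showing its reduced value is negative; the $n=m=2$ instance, where one checks directly that the corresponding degree-$0$ class is non-trivial in the Jacobian, is a useful sanity check.

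I expect this sub-lemma, together with its analogues for the remaining configurations, to be the main obstacle, for two reasons. First, the defeating divisor $E$ must be genuinely spread out: the naive single-vertex test $E=kq$ is useless, since near-full divisors such as $D_0-v$ have \emph{every} $q$-reduced value at least $n+m-2$ even though their rank is only $2$, so a small reduced value can never be found and one is forced to work with $E$ supported on several vertices. Second, the argument splits into cases according to whether the forced empty vertices share a row or column—exactly the situation in which $D_0-2a-2b$ \emph{does} become effective and a different $E$ is required—and according to how any excess chips (vertices carrying two or more chips, including heavily concentrated divisors) are distributed; each case needs its own Dhar-burning certificate. Carrying out this configuration analysis uniformly in $n$ and $m$, and verifying that the constructed $E$ always defeats $D$, is where essentially all of the work lies.
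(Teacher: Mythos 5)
Your upper-bound half is correct and essentially coincides with the paper's: you show the all-ones divisor $D_0$ has rank at least $3$ by a case analysis on how the removed chips sit relative to the rows and columns (the paper organizes this as ``remove any two chips and check positive rank survives,'' but the firing moves are the same ones you list), and the step $\mathrm{gon}_2\le \mathrm{gon}_3-1$ is the same chip-removal lemma. Your reduction of both lower bounds to the single statement (B) --- every effective divisor of degree $nm-2$ has rank at most $1$ --- is also sound, since (A) follows from (B) by removing one chip, and dominating by effective divisors handles smaller degrees.

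The gap is that (B) is never proved, and the plan you sketch for it is missing the one idea that makes a uniform proof possible. You treat a single model configuration $D=D_0-a-b$, rest it on an unproven sub-lemma (that $D_0-2a-2b$ is not equivalent to an effective divisor for non-adjacent $a,b$; you verify only $n=m=2$), and explicitly defer the analysis of all other effective divisors of degree $nm-2$ --- including heavily concentrated ones --- to an open-ended list of cases, each needing its own defeating $E$ and its own Dhar certificate. That enumeration is exactly the hard part, and attacking it representative-by-representative is the wrong frame, because the defeating $E$ must work against the entire linear equivalence class of $D$, not against one chip configuration. The paper's uniformizing device, the same one that drives its proof of Theorem \ref{gonality}, is to normalize the representative first: among all effective divisors equivalent to $D$, choose one maximizing the number of chips in its poorest column. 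With that choice fixed, one removes a single well-chosen chip (from a poorest column if that column has $n-1$ chips, otherwise from a poorest row) and runs Dhar's burning algorithm from an empty vertex; if some set $U$ were left unburnt, firing $U$ would produce an equivalent effective divisor whose poorest column is strictly richer, contradicting maximality, so the whole graph burns. The resulting divisor is $v$-reduced with $v$ empty, hence has non-positive rank by Lemma \ref{rank}, and (B) follows with no case analysis over configurations at all. Without this normalization (or some substitute playing the same role), your proposal establishes only the upper bounds $\mathrm{gon}_3\le nm$ and $\mathrm{gon}_2\le nm-1$; the matching lower bounds remain a program rather than a proof.
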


It is not currently known if the 4 gonality behaves nicely for these graphs as it does with the 2 and 3 gonalities.

\section{Preliminaries}
We begin by establishing terminology and giving background results.

\subsection{Graphs}
For the entirety of this paper, we will assume that our graphs are connected and without loops. However, multiple edges between vertices are allowed. Given a graph $G$, we denote the vertex set by $V(G)$ and the edge set by $E(G)$. If $A\subseteq V(G)$ then the complement of $A$ will be denoted as $A^c$. 
\begin{definition}
A partition of the vertices into two sets, $(A, A^c)$ is referred to as a \emph{cut}. The \emph{cut-set} $E(A,A^c)$ is the set of edges that have one end in $A$ and the other end in $A^c$. 
\end{definition}

\begin{definition}
Given two graphs $G$ and $H$, we can construct a new graph by taking their \emph{Cartesian product} $G\square H$, with vertex set $$V(G\square H)=\{(x,y)|x\in V(G), y\in V(H)\}$$ and edge set $$E(G\square H)=\{(x,y_1)\sim (x,y_2)|y_1\sim y_2\in E(H)\}\cup \{(x_1,y)\sim (x_2,y)|x_1\sim x_2\in E(G)\}.$$
\end{definition}

\begin{definition}
A \emph{rook graph} is the Cartesian product of 2 or more complete graphs. \end{definition}  

The $n\times m$ rook graph $K_n\square K_m$ can be represented as an $n\times m$ lattice where two lattice points are adjacent if they share either the same row or column. The name rook graph comes from this lattice representation because two vertices are adjacent if they are a rooks move apart.

\subsection{Graph Divisors and Chip Firing}
\begin{definition} A \emph{divisor} $D$ on a graph $G$ is a $\mathbb{Z}-$linear combination of the vertices in $G$, or alternatively, an integer vector in  $\mathbb{Z}^{V(G)}$.\end{definition}

Divisors on a graph are often described as stacks of poker chips on each vertex, where a negative number on a vertex is thought of as a "debt". Because of this, divisors, are sometimes referred to as chip configurations.
\begin{definition}
The \emph{degree} of a divisor, denoted deg$(D)$, is the sum of all coordinates of the vector $D\in \mathbb{Z}^{V(G)}$, or simply the sum of all chips and debts.\end{definition}
We "fire" a vertex $v$ by transferring one chip along each edge connected to $v$, away from that fired vertex. If we obtain $D'$ by starting with divisor $D$ and firing some vertex subset $A$, then we can obtain $D$ from $D'$ by firing $A^c$. This imposes equivalence classes on the set of divisors of a fixed degree, where two divisors are equivalent if and only if there are some series of chip fires apart from one another.

\begin{definition} The following terms are needed to understand Dhar's burning algorithm and the proof of Theorem \ref{gonality}
\begin{itemize}

\item A divisor is \emph{effective} if all vertices have a non-negative number of chips, and we say a divisor is \emph{effective away from $v$} if all vertices, apart from $v\in V(G)$, have a non-negative number of chips.

\item A divisor is \emph{v-reduced} if it is effective away from $v$, and firing any subset of $V(G)\setminus v$ results in a divisor that is not effective away from $v$.

\item A divisor $D$ has \emph{rank} of at least $r$ if, for every effective divisor $E$ of degree $r$, $D-E$ is equivalent to an effective divisor.

\item The \emph{gonality} of a graph $G$, denoted gon$(G)$, is the fewest number of chips needed to construct a divisor of rank $1$.
\end{itemize}
\end{definition}

Another way to understand the rank of a divisor is in the context of a chip firing game. If a divisor $D$ has rank $r$, that means  if someone were to steal $r$ chips from anywhere on the graph, even from vertices that have 0 or a debt of chips, then there is some series of chip firings one could perform to get back an effective divisor. It also means that there is at least one way to steal $r+1$ chips that would make getting back to an effective divisor impossible.

\begin{lemma}\label{rank} If a divisor $D$ is $v$-reduced, and $v$ has 0 or fewer chips, then $D$ does not have positive rank.\end{lemma}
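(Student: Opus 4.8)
The plan is to prove the contrapositive-flavored statement directly: to show that $D$ fails to have positive rank, I will exhibit a single effective divisor $E$ of degree $1$ for which $D-E$ is not equivalent to an effective divisor. By the definition of rank, a divisor has rank at least $1$ only if $D-w$ is equivalent to an effective divisor for \emph{every} vertex $w$, so producing one bad choice of $E$ suffices.

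The natural candidate is $E = v$ itself, i.e.\ the divisor with a single chip on $v$. First I would observe that $D-v$ is still effective away from $v$, since removing a chip from $v$ leaves all other vertices untouched, and that $(D-v)(v) = D(v)-1 \leq -1 < 0$ by the hypothesis $D(v)\leq 0$. The key structural claim is that $D-v$ remains $v$-reduced. This should follow because both conditions defining $v$-reducedness depend only on the chip counts at vertices other than $v$: firing a subset $A \subseteq V(G)\setminus v$ changes each vertex's chip count by a quantity determined solely by $A$ and the graph, so for every $w\neq v$ the result of firing $A$ agrees for $D$ and $D-v$. Hence firing any $A\subseteq V(G)\setminus v$ drives some $w\neq v$ negative for $D-v$ exactly when it does for $D$, and $v$-reducedness is inherited.

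The decisive step is then to invoke the standard fact that a $v$-reduced divisor with a negative number of chips on $v$ cannot be equivalent to any effective divisor. I would justify this via Dhar's burning algorithm and the monotonicity it relies on: reducing an effective divisor to its $v$-reduced form proceeds by firing sets \emph{avoiding} $v$, and each such firing can only move chips \emph{toward} $v$, so the $v$-reduced representative of any effective divisor has a nonnegative value at $v$. By uniqueness of the $v$-reduced divisor in each equivalence class (\cite{BakerNorine}), a $v$-reduced divisor with $(D-v)(v)<0$ therefore lies in no effective class. Applying this to $D-v$ shows $D-v$ is not equivalent to an effective divisor, so $D$ does not have positive rank.

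The main obstacle is the monotonicity argument underpinning this last fact; everything else is bookkeeping about how firing a set $A$ with $v\notin A$ acts identically on the non-$v$ coordinates of $D$ and $D-v$. Since this monotonicity is precisely the content of Dhar's burning algorithm, I expect to phrase the conclusion as a short appeal to that algorithm rather than reproving it, keeping the lemma's proof to a few lines.
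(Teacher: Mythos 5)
Your proposal is correct and takes essentially the same approach as the paper: steal the single chip from $v$ itself, so that the resulting divisor is negative at $v$ while still $v$-reduced, and hence not equivalent to any effective divisor. The paper's own proof is a terser two-sentence version of this, leaving implicit the monotonicity/uniqueness facts about $v$-reduced divisors that you spell out via Dhar's algorithm and \cite{BakerNorine}.
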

\begin{proof}
Let the ''stolen" chip be from the specified vertex $v$. Then $v$ has a negative number of chips and since $D$ is $v-$reduced, firing any other set of vertices will result in a non-effective divisor.
\end{proof}

\subsection{Dhar's burning algorithm}
Here we briefly review Dhar's burning algorithm, which was first introduced in \cite{DBA}. This algorithm tells us what series of chip fires need to occur in order to get a divisor $D$ to an equivalent divisor $D'$ that is $v$-reduced for some chosen vertex $v$. We begin the algorithm by starting a "fire" at our chosen vertex. it should be noted by the reader that this notion of starting a fire is separate from chip firing. The fire spreads to all edges adjacent to a burning vertex, and a vertex will burn if it has more burning adjacent edges than it does chips. If the entire graph burns, then the divisor is reduced at the vertex $v$. Otherwise, if there is some set of unburnt vertices left, we fire these vertices and start a new fire at $v$. The process continues until we have a divisor in which all vertices will burn.

\begin{lemma}\label{poorest row}
If $D$ is an effective divisor of degree at most $n-2$ on $K_n$, then a fire started on any vertex with no chips will burn the entire graph.
\end{lemma}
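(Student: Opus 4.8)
The plan is to argue by contradiction, exploiting the fact that the complete-graph structure makes Dhar's burning condition remarkably simple. The central observation is that in $K_n$ every pair of distinct vertices is adjacent, so if $B$ denotes the current set of burning vertices and $u \notin B$, then the number of burning edges incident to $u$ is exactly $|B|$. Consequently $u$ catches fire precisely when $|B| > D(u)$; the burning condition depends only on the \emph{size} of $B$, not on which vertices it contains.

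First I would let the fire start at the chosen chipless vertex $v$. Since $D(v) = 0$, the vertex $v$ burns immediately, so the burnt set is nonempty throughout the process. Suppose toward a contradiction that the algorithm terminates with a nonempty set $U$ of unburnt vertices. Writing $B = V(K_n) \setminus U$ for the final burnt set, stability of the terminal configuration means that no vertex of $U$ catches fire, that is, $D(u) \geq |B| = n - |U|$ for every $u \in U$.

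Next I would bound the degree of $D$ from below. Summing this inequality over $U$ and using effectivity (all chip counts are non-negative) gives $\deg(D) \geq \sum_{u \in U} D(u) \geq |U|(n - |U|)$. Because $v$ burns we have $v \notin U$, so $1 \leq |U| \leq n-1$. The quantity $|U|(n - |U|)$ is a downward-opening parabola in $|U|$, so over the integer range $1 \leq |U| \leq n-1$ it attains its minimum at the endpoints, where it equals $n-1$. Hence $\deg(D) \geq n-1$.

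This contradicts the hypothesis $\deg(D) \leq n - 2$, so $U$ must be empty and the entire graph burns. I do not anticipate a serious obstacle: once the burning rule is reduced to the single inequality $D(u) \geq n - |U|$, the only quantitative input needed is the elementary minimization of $|U|(n - |U|)$, and the degree hypothesis $\deg(D) \leq n-2$ is designed to fall exactly one short of the minimum $n-1$. The one point requiring care is confirming that $v$ itself always burns, which guarantees $|U| \leq n-1$ and thereby excludes the degenerate case $|U| = n$ that would otherwise make $|U|(n-|U|) = 0$.
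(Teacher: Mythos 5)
Your proof is correct and follows essentially the same approach as the paper: letting $U$ be the set of unburnt vertices, observing each unburnt vertex needs at least $n-|U|$ chips, and deriving the contradiction $\deg(D)\geq |U|(n-|U|)\geq n-1$ from the endpoint minimization of that product over $1\leq |U|\leq n-1$. Your write-up simply makes explicit two points the paper leaves implicit (the reduction of the burning condition to $|B|>D(u)$ in a complete graph, and why the minimum occurs at the endpoints), which is a matter of exposition rather than a different argument.
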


\begin{proof}
Let $k$ be the number of unburnt vertices left after starting a fire on some vertex with no chips. For these $k$ vertices to not catch fire, they each must have a minimum of $n-k$ chips. Since our divisor is effective and has degree at most $n-2$, we must have $k(n-k)\leq (n-2)$. The only values for which this inequality holds are $k=0$ and $k=n$. We know $k\neq n$ because one vertex was burnt at the beginning. Therefore all vertices must burn. 
\end{proof}

\section{the gonality of $K_n\square K_m$}
For the remainder of the paper, we will assume without loss of generality that $n\leq m$.

\begin{definition}Given a divisor $D$ on $K_n\square K_m$, a \emph{poorest column} of $D$ refers to a copy of $K_n$ that contains the fewest number of chips. Similarly, a \emph{poorest row} refers to a copy of $K_m$ that contains the fewest number of chips. \end{definition}

Note that a poorest row/column need not be unique.

\begin{customthm}{1.1}\label{gonality-restate}
If $n\leq m$, $\textrm{gon}(K_n\square K_m)=(n-1)m$.
\end{customthm}

\begin{proof}First, we observe that $\textrm{gon}(K_n\square K_m)\leq(n-1)m$ since the divisor that has one chip on every vertex except for one row which is left empty, has degree $(n-1)m$ and positive rank. We then need to show that every divisor of degree $(n-1)m-1$ does not have positive rank.

Let $D$ be an effective divisor of degree $(n-1)m-1$, and without loss of generality, we assume that $D$ has a maximal number of chips in its poorest column among all equivalent effective divisors. A poorest column contains at most $n-2$ chips and by lemma \ref{poorest row}, starting a fire at one of the vertices with no chips will make the entire column burn. Additionally, the poorest row of $D$ divisor will have at most $m-2$ chips since $(n-1)m-1\leq n(m-1)-1$. Because an entire column has burned, the poorest row must also burn. We then assume for contradiction that there is some nonempty subset $U\subset V(K_n\square K_m)$ which is left unburnt. We can then fire $U$ to obtain a new effective divisor $D'$.

If column $k$ has $j>0$ vertices in $U$, then firing $U$ will transfer $j(n-j)$ chips from the $j$ vertices in $U$ to the other $n-j$ vertices in column $k$. Since $D'$ is effective, column $k$ will have at least $j(n-j)$ chips in $D'$. Because the poorest row has burned, $j< n$ and thus, $j(n-j)\geq n-1$. Therefore any column that intersects with $U$, will not become the poorest column of $D'$. However, every column that doesn't intersect with $U$ will gain $|U|$ chips once $U$ is fired. This is a contradiction since we assumed $D$ maximized the number of chips in the poorest column. Therefore all vertices of $K_n\square K_m$ must burn, meaning $D$ is $v$-reduced for some vertex that has 0 chips. Then by Lemma \ref{rank}, we conclude that $D$ does not have positive rank.  
\end{proof}

\section{the scramble number of a graph}
\definition{Given a graph $G$, we say $S$ is a \emph{scramble} on $G$ if $S$ is a collection of non-empty vertex subsets of $G$ such that every vertex subset is connected.}

A scramble is a generalization of a bramble, with the removed condition that every pair of vertex subsets need to be connected. We will refer to the vertex subsets of a scramble as \emph{eggs}.
\begin{example}
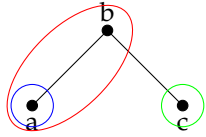
\begin{figure}[ht]
\begin{tikzpicture}
\filldraw (0,0) circle (2pt);
\filldraw (1,1) circle (2pt);
\filldraw (2,0) circle (2pt);
\node at (0,-0.25) {a};
\node at (1,1.25) {b};
\node at (2,-0.25) {c};
\draw (0,0)--(1,1);
\draw (1,1)--(2,0);
\draw[rotate around={45:(0.5,0.5)},red] (0.5,0.5) ellipse (30pt and 15pt);
\draw [blue](0,0) circle (8pt);
\draw [green](2,0) circle (8pt);

\end{tikzpicture}
\caption{A graph with a scramble consisting of the eggs $\{a\}, \{a,b\}, \{c\}$}
\label{Fig:scramble1}
\end{figure}
\end{example}
In Figure \ref{Fig:scramble1}, We have a scramble on our graph that contains three eggs. Notice that two eggs can share vertices and one egg can even be entirely contained in another. 
\begin{definition}given a scramble $S$ on graph the $G$, a set $H\subseteq V(G)$ is a \emph{hitting set} of $S$ if $H\cap E\neq \emptyset$ for every egg $E$ in $S$.\end{definition}

The set $\{a,b,c\}$ is a hitting set of the scramble shown in Figure \ref{Fig:scramble1}, However $b$ is not needed, since $\{a,c\}$ intersects non-trivially with all three eggs. Since there is no single vertex that intersects with all eggs, $\{a,c\}$ is a minimal hitting set.

\begin{definition}We say that $(X,Y)$ is an \emph{egg cut} of our scramble $S$ if $X\coprod Y=V(G)$ and there are two eggs $E_1, E_2$ such that $E_1\subset X, E_2\subset Y$. The set of edges $E(X,Y)$ is referred to as the \emph{egg cut set} or simply the \emph{cut set}, when the context is clear that we are talking about an egg cut.\end{definition}

In Figure \ref{Fig:scramble1} both $\{a,b\}\coprod\{c\} $ and $\{a\}\coprod \{b,c\}$ are egg cuts with a cutset of size 1. $\{b\}\coprod\{a,c\}$ is not an egg cut because the subset $\{b\}$ does not contain an egg. 

\begin{definition}Given a scramble $S$ with minimal hitting set $H$ and minimal egg cut set $E(X,Y)$, the \emph{order} of $S$, denoted $||S||$ is min$\{|H|,|E(X,Y)|\}$.\end{definition}

\begin{definition}The \emph{scramble number} of a graph, denoted $sn(G)$ is the largest order of a scramble that exists on the graph.\end{definition}

\begin{theorem}\label{scramble_bound} \cite{paper1}
For any graph $G$, we have $sn(G)\leq \textrm{gon}(G)$.
\end{theorem}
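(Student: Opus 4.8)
The plan is to prove the equivalent statement that for every scramble $S$ on $G$, any effective divisor of positive rank has degree at least $\|S\|$; taking the supremum over all scrambles then yields $\mathrm{gon}(G)\ge sn(G)$. Fix a scramble $S$ and let $D$ be an effective divisor of positive rank. Recalling that $\|S\|=\min\{|H|,|E(X,Y)|\}$, where $H$ ranges over minimal hitting sets and $(X,Y)$ over egg cuts, I would argue by contradiction and suppose $\deg(D)<\|S\|$. In particular $\deg(D)<\|S\|\le|H|$, so the support of $D$ has fewer than $|H|$ vertices and therefore cannot be a hitting set. Hence there is an egg $E_1$ disjoint from the support of $D$, i.e. $D$ places no chips on any vertex of $E_1$.

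Pick $v\in E_1$ and let $D_v\sim D$ be the $v$-reduced divisor produced by Dhar's burning algorithm. Since $D$ has positive rank, Lemma \ref{rank} forces $D_v(v)\ge 1$: otherwise $D_v$ would be $v$-reduced with no chips on $v$ and hence have non-positive rank, a contradiction. Write $D_v=D+\mathrm{div}(f)$ with $f\ge 0$ and $f(v)=0$, so that $D_v(v)=\sum_{u\sim v}f(u)\ge 1$ shows that some neighbor of $v$ is fired. The level sets $A_t=\{w: f(w)\ge t\}$ then form a nonempty nested family $A_1\supseteq A_2\supseteq\cdots$, each avoiding $v$, and firing $f$ amounts to firing each $A_t$ once.

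The crux is to convert this firing data into an egg cut with a small cut set. Across each level $(A_t,A_t^c)$ chips flow strictly from $A_t$ to $A_t^c$, and summing these flows recovers the net motion of the $\deg(D)$ chips of $D$, so the total number of chips crossing all the level cuts is controlled by $\deg(D)$. I would show that some threshold $t$ yields a genuine egg cut: one side $A_t$ contains an entire egg while $A_t^c$ contains an egg as well. For such a $t$, a counting argument bounds $|E(A_t,A_t^c)|$ by the number of chips crossing that cut, which is at most $\deg(D)$; then $\|S\|\le|E(A_t,A_t^c)|\le\deg(D)$ contradicts $\deg(D)<\|S\|$.

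The main obstacle is precisely this last extraction: guaranteeing that a single level set separates two full eggs while keeping its cut-set size bounded by $\deg(D)$. Establishing it will require the connectivity of eggs (so that a connected egg cannot straddle a level cut without contributing a crossing edge, which pins it to one side) together with careful accounting of how the few chips of $D$ distribute across the nested cuts, so that a cut carrying more edges than $\deg(D)$ would demand more chips than $D$ possesses. Making the choice of separating threshold robust, and ruling out the degenerate possibility that every egg sits consistently on the same side of every level cut, is the delicate part; once it is handled, the contradiction above closes the argument and shows $sn(G)\le\mathrm{gon}(G)$.
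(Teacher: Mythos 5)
Your scaffolding is correct and is essentially the standard approach (note, for calibration, that this paper does not prove the statement itself --- it cites \cite{paper1} for it, so the comparison is with the argument of that reference). The reduction to ``every effective divisor $D$ of positive rank satisfies $\deg(D)\geq \|S\|$,'' the observation that $\deg(D)<\|S\|$ forces some egg $E_1$ to be disjoint from $\mathrm{supp}(D)$, the passage to the $v$-reduced divisor $D_v$ with $D_v(v)\geq 1$ via Lemma~\ref{rank}, and the ``cheapness'' bound $|E(A_t,A_t^c)|\leq \deg(D)$ for every level set $A_t$ of the firing script (each cut edge carries at least one net chip into $A_t^c$, and $A_t^c$ can absorb at most $\deg(D)$ chips) are all right and can be made rigorous.

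The step you defer, however --- ``some threshold $t$ yields a genuine egg cut'' --- is not a verification to be filled in later; it is the entire content of the theorem, and it cannot be established the way you envision. First, your pinning mechanism is false: a connected egg can perfectly well straddle a level cut; contributing a crossing edge does not place it inside either side, it merely makes that egg useless for forming an egg cut. The only one-sided containment actually available is for $E_1$ itself: because $E_1$ is connected and chip-free, Dhar's burning algorithm burns all of it, so $E_1\subseteq A_t^c$ for the relevant cuts --- but nothing forces any egg to lie \emph{inside} some $A_t$. Second, the plan is circular as stated: under your hypothesis $\deg(D)<\|S\|$, every egg cut has more than $\deg(D)$ edges, while every level cut has at most $\deg(D)$ edges, so no level cut can be an egg cut; any ``proof'' that one is must already contain the contradiction, derived by other means. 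The known resolution treats the failure case as the main case and falls back on the \emph{hitting set} half of the order: if a fired (unburnt) set $U$ contains no egg, then firing $U$ cannot un-hit any egg --- an egg whose chips all lie in $U$ must, being connected and not contained in $U$, straddle the cut, hence receives a chip along one of its own edges, while eggs meeting $U^c$ only gain chips there. Iterating Dhar's algorithm from a vertex of a missed egg until that egg is hit, then repeating with another missed egg, the set of hit eggs grows monotonically; by finiteness one either encounters a fired set containing an egg (giving an egg cut of size at most $\deg(D)$) or arrives at an equivalent effective divisor whose support is a hitting set, giving $\deg(D)\geq \|S\|$ either way. This monotonicity lemma and the accompanying termination argument are precisely what your proposal is missing.
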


\begin{example}
In Figure \ref{Fig:Refinement} we see two graphs, where the left graph is a refinement of the graph on the right. Both graphs have a scramble number of 2. The graph on the right has gonality 2 while the graph on the left has gonality 3. A more detailed explanation on how to compute the scramble number and gonality of these graphs can be found in \cite{paper1}.
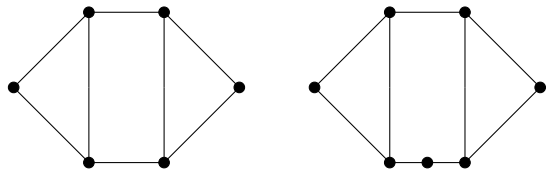
\begin{figure}[ht]
\begin{tikzpicture}
\filldraw (0,1) circle (2pt);
\filldraw (1,0) circle (2pt);
\filldraw (1,2) circle (2pt);
\filldraw (2,0) circle (2pt);
\filldraw (2,2) circle (2pt);
\filldraw (3,1) circle (2pt);
\draw (0,1)--(1,0);
\draw (0,1)--(1,2);
\draw (1,0)--(1,1);
\draw (1,0)--(2,0);
\draw (1,1)--(1,2);
\draw (1,2)--(2,2);
\draw (2,0)--(2,1);
\draw (2,0)--(3,1);
\draw (2,1)--(2,2);
\draw (2,2)--(3,1);

\filldraw (4,1) circle (2pt);
\filldraw (5,0) circle (2pt);
\filldraw (5,2) circle (2pt);
\filldraw (6,0) circle (2pt);
\filldraw (6,2) circle (2pt);
\filldraw (7,1) circle (2pt);
\filldraw (5.5,0) circle (2pt);
\draw (4,1)--(5,0);
\draw (4,1)--(5,2);
\draw (5,0)--(5,1);
\draw (5,0)--(6,0);
\draw (5,1)--(5,2);
\draw (5,2)--(6,2);
\draw (6,0)--(6,1);
\draw (6,0)--(7,1);
\draw (6,1)--(6,2);
\draw (6,2)--(7,1);
\end{tikzpicture}
\caption{Two graphs with the same scramble number, but different gonalities}
\label{Fig:Refinement}
\end{figure}
\end{example}

\section{computing the scramble number of rook graphs}

We begin this section with the following theorem which will assist us in computing the scramble number of many rook graphs.
\begin{theorem}
\label{cutset}
 Given a cut $A\coprod B$ on $K_n\square K_m$ Such that $|A|, |B|\geq n-1$, we have $|E(A,B)|\geq (n-1)m$. 
\end{theorem}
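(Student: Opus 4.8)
The plan is to count the cut-set by organizing the edges of $K_n\square K_m$ according to the copies of $K_n$ and $K_m$ in which they lie. Viewing the graph as an $n\times m$ grid, every edge lies either inside one of the $m$ columns (a copy of $K_n$) or inside one of the $n$ rows (a copy of $K_m$), and these two families partition $E(K_n\square K_m)$. Writing $a_j$ for the number of vertices of $A$ in column $j$ and $\alpha_i$ for the number of vertices of $A$ in row $i$, an edge inside a column crosses the cut exactly when it joins an $A$-vertex to a $B$-vertex of that column, so column $j$ contributes $a_j(n-a_j)$ cut edges and row $i$ contributes $\alpha_i(m-\alpha_i)$. Hence
$$|E(A,B)|=\sum_{j=1}^m a_j(n-a_j)+\sum_{i=1}^n \alpha_i(m-\alpha_i).$$
The two elementary facts I would record first are that a \emph{split} column (one with $1\le a_j\le n-1$) contributes at least $n-1$, since $x(n-x)\ge n-1$ on $\{1,\dots,n-1\}$, and likewise a split row contributes at least $m-1$.

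Next I would classify the $m$ columns as pure-$A$ ($a_j=n$), pure-$B$ ($a_j=0$), or split, with counts $p$, $q$, and $t=m-p-q$, and break into three cases. If $p\ge 1$ and $q\ge 1$, then every row meets both an all-$A$ column and an all-$B$ column, so $1\le\alpha_i\le m-1$ for every $i$; the row terms alone give $|E(A,B)|\ge n(m-1)=nm-n\ge (n-1)m$, using $n\le m$. If $p=q=0$, then all $m$ columns are split and the column terms alone give $|E(A,B)|\ge m(n-1)$. In both of these cases the hypothesis $|A|,|B|\ge n-1$ is not needed.

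The remaining case, where pure columns occur on exactly one side, is the crux; by the symmetry $A\leftrightarrow B$ (which fixes both the cut-set and the hypotheses) I may assume $p=0$ and $q\ge 1$, so there are $q$ all-$B$ columns and $t=m-q$ split columns. Here neither edge family alone is large enough, and the hypothesis $|A|\ge n-1$ finally enters. The key step is a lower bound on the row terms: because the $q$ all-$B$ columns place at least $q$ vertices of $B$ in every single row, we have $m-\alpha_i\ge q$ for all $i$, whence $\alpha_i(m-\alpha_i)\ge q\,\alpha_i$ and therefore $\sum_i \alpha_i(m-\alpha_i)\ge q\sum_i\alpha_i=q\,|A|\ge q(n-1)$. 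Combining this with the column bound $\sum_j a_j(n-a_j)\ge t(n-1)$ coming from the $t$ split columns yields $|E(A,B)|\ge (t+q)(n-1)=m(n-1)$, since $t+q=m$.

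The hard part is exactly this last case. When $p=0$ the set $A$ can be as small as $n-1$ vertices, so the split columns may contribute as little as $t(n-1)$ with $t$ small, and the missing $q(n-1)$ must be extracted from the row edges. The delicate point is the inequality $\sum_i\alpha_i(m-\alpha_i)\ge q\,|A|$, which simultaneously uses the $q$ all-$B$ columns (to force $m-\alpha_i\ge q$ in every row) and the cardinality hypothesis $|A|\ge n-1$; this is the only place the hypothesis is used, and it is precisely what lets the two partial estimates add up to the sharp value $(n-1)m$. As a sanity check I would verify sharpness against the configuration in which $A$ consists of $n-1$ vertices stacked in a single column (so $q=m-1$, $t=1$), for which both partial bounds, and hence the total $(n-1)m$, are attained with equality.
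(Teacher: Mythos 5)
Your proof is correct, and it takes a genuinely different route from the paper's. The paper argues by cases on sizes and on full rows: when one side has at most $m$ vertices it uses the degree count $|E(A,B)|=(n+m-2)|A|-2k$ together with the observation that the internal edge count $k$ is maximized when $A$ sits in a single row, and when both sides have more than $m$ vertices it counts cut edges directly according to whether full copies of $K_m$ lie on both sides or only on one side. You instead work from the exact decomposition $|E(A,B)|=\sum_j a_j(n-a_j)+\sum_i \alpha_i(m-\alpha_i)$ and case on whether pure-$A$ and pure-$B$ columns exist; your crux estimate $\sum_i \alpha_i(m-\alpha_i)\ge q|A|\ge q(n-1)$, which turns the $q$ pure-$B$ columns plus the hypothesis $|A|\ge n-1$ into exactly the $q(n-1)$ cut edges that the split columns cannot supply, has no counterpart in the paper. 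Your approach buys uniformity (one identity plus the elementary bound $x(n-x)\ge n-1$ for $1\le x\le n-1$ drives every case), makes the role of the hypothesis $|A|\ge n-1$ completely transparent since it enters exactly once, and exhibits the extremal configuration of $A$ being $n-1$ vertices in one column. The paper's approach buys a more geometric account of sharpness: its optimization in the first case shows that single-row placements of $A$ are worst and that $|A|(m+n-|A|-1)$ is minimized precisely at the boundary values $|A|=n-1$ and $|A|=m$. Both proofs invoke $n\le m$ at one point (yours in the case $p,q\ge 1$ via $n(m-1)\ge (n-1)m$; the paper's in its final case).
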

\begin{proof}
First, we establish that the proposition holds if either $A$ or $B$ has at most $m$ vertices. Assume without loss of generality that $n-1\leq |A|\leq m$. Every vertex in $K_n\square K_m$ has degree $n+m-2$, and thus $$|E(A,B)|=(n+m-2)|A|-2k,$$ where $k$ is the number of edges with both ends in $A$. The number $k$ reaches a maximal value of $\binom{|A|}{2}$ when all vertices of $A$ are in a single row or column. Thus we need only to consider the case where $A$ is contained in a single row. Then $$|E(A,B)|=|A|(m-|A|)+|A|(n-1),$$ where $|A|(m-|A|)$ represents the number of horizontal cut edges between two vertices in the same row, and $|A|(n-1)$ represents the number of vertical cut edges between two vertices contained in the same column. $$|A|(m-|A|)+|A|(n-1)=|A|(m+n-|A|-1).$$ This product is minimized at the boundary cases when $|A|=n-1$ or $|A|=m$, and the resulting product is $(n-1)m$.

Next, we establish that the proposition holds if both $A$ and $B$ contain an entire row. Without loss of generality, assume the first row is entirely in $A$ and the second row is entirely in $B$. Then there are $m$ cut edges between those two rows and each of the remaining $(n-2)m$ vertices will have a vertical cut edge between itself and one of the first two rows. Thus $|E(A,B)|\geq (n-1)m$.

Finally, we assume that both $|A|, |B|\geq m+1$, there are $0\leq i\leq n-2$ rows completely contained in $A$ and no rows completely contained in $B$. Because $|B|\geq m+1$, there will be at least $i(m+1)$ vertical cut edges between the vertices in $B$ and the $i$ rows in $A$. The remaining $n-i$ rows contain at least one vertex in $A$ and one vertex in $B$. Thus each of these rows contains at least $m-1$ horizontal cut edges. Thus $$|E(A,B)|\geq i(m+1)+ (n-i)(m-1)> i(m-1)+(n-i)(m-1)=n(m-1)\geq (n-1)m.$$  
\end{proof}

\begin{lemma} For all $m\geq 2$ we have $sn(K_2\square K_m)=m=\textrm{gon}(K_2\square K_m)$
\end{lemma}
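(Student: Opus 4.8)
The plan is to obtain the two halves of the equality from two tools already at hand, with essentially no new computation. The gonality value is immediate: applying Theorem \ref{gonality} with $n=2$ gives $\textrm{gon}(K_2\square K_m)=(2-1)m=m$. Combining this with the general bound $sn(G)\le\textrm{gon}(G)$ from Theorem \ref{scramble_bound} already yields $sn(K_2\square K_m)\le m$, so everything reduces to exhibiting a single scramble of order at least $m$.

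For the lower bound I would take $S$ to be the scramble whose eggs are all $2m$ singleton vertex subsets of $K_2\square K_m$. Each singleton is trivially connected, so $S$ is a valid scramble, and the point of this choice is that it forces both quantities in the order $\|S\|=\min\{|H|,|E(X,Y)|\}$ to be large at once. First consider the hitting set: since every vertex is itself an egg, a hitting set must intersect every singleton and hence must contain every vertex, so the unique (and therefore minimal) hitting set is $V(K_2\square K_m)$, of size $2m$. Next consider the egg cuts: because every vertex is an egg, any partition $X\coprod Y=V(K_2\square K_m)$ into two nonempty parts automatically contains an egg inside each side, so every nontrivial cut is an egg cut and the minimal egg cut set is just a minimum cut of the graph.

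Here I would invoke Theorem \ref{cutset} with $n=2$: its hypothesis $|X|,|Y|\ge n-1=1$ is satisfied by any nontrivial cut, and it gives $|E(X,Y)|\ge(n-1)m=m$. Hence every egg cut set of $S$ has size at least $m$. Since the minimal hitting set has size $2m$ and every egg cut set has size at least $m$, both arguments of the minimum are at least $m$, so $\|S\|=\min\{|H|,|E(X,Y)|\}\ge\min\{2m,m\}=m$ and therefore $sn(K_2\square K_m)\ge m$. Together with the upper bound this gives the claimed chain of equalities.

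There is little genuine difficulty once the right scramble is selected; the work lies entirely in the choice of $S$. The only point that needs care is distinguishing egg cuts from arbitrary cuts: the lower bound from Theorem \ref{cutset} is useful precisely because every vertex is an egg, which is exactly what guarantees that each side of any nontrivial partition contains an egg. Had I chosen a sparser scramble, for instance taking the two rows as eggs, the hitting set could be as small as $2$ and the order would collapse, so the main design decision is to make the eggs fine enough to force a large hitting set while relying on Theorem \ref{cutset} to control the egg cut sets.
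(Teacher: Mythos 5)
Your proposal is correct and follows essentially the same route as the paper: both use the scramble whose eggs are the singleton vertices, observe that the minimal hitting set is all of $V(K_2\square K_m)$, and invoke Theorem \ref{cutset} (with $n=2$) to bound every egg cut set below by $m$, then combine with Theorems \ref{gonality} and \ref{scramble_bound} for the upper bound. Your write-up is if anything slightly more careful, since it makes explicit that every nontrivial cut is an egg cut and that the hypothesis $|X|,|Y|\geq n-1=1$ of Theorem \ref{cutset} is automatically satisfied.
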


\begin{proof}
By Theorems \ref{gonality} and \ref{scramble_bound} it suffices to find a scramble of order $n$. Let $S$ be the scramble where every vertex is its own egg. Then the minimal hitting set is all of $V(K_2\square K_m)$. By Theorem \ref{cutset}, any egg cut of $S$ will have a cut set of size greater than or equal to $m$. Therefore $||S||=m$. 
\end{proof}
\begin{lemma} For all $m\geq 3$, we have $sn(K_3\square K_m)=2m=\textrm{gon}(K_3\square K_m)$. \end{lemma}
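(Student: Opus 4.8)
The plan is to reduce the lemma to the construction of a single well-chosen scramble of large order. By Theorem \ref{gonality} applied with $n=3$ we have $\textrm{gon}(K_3\square K_m)=(3-1)m=2m$, and Theorem \ref{scramble_bound} gives $sn(K_3\square K_m)\le \textrm{gon}(K_3\square K_m)=2m$. Hence it suffices to exhibit a scramble $S$ on $K_3\square K_m$ with $\|S\|\ge 2m$, and equality follows at once. Since $\|S\|=\min\{|H|,|E(X,Y)|\}$ ranges over minimal hitting sets $H$ and minimal egg cuts $(X,Y)$, I must arrange for \emph{both} quantities to be at least $2m$ simultaneously.

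The tension between these two requirements is the crux of the argument and dictates the choice of eggs. If I take every vertex as its own egg (as in the $K_2$ case), the minimal hitting set is forced to be all $3m$ vertices, but a single vertex is then itself an egg, so isolating it yields an egg cut of size $\deg(v)=m+1<2m$, which is too small. At the other extreme, taking each whole column (a copy of $K_3$) as an egg makes egg cuts large but permits an egg-free set to contain two vertices per column, so the minimal hitting set collapses to $m$. The remedy is to use eggs of size exactly $2$: I let $S$ consist of all $2$-element subsets of each column, i.e. every pair of vertices lying in a common copy of $K_3$. Each such egg is connected, since its two vertices are adjacent within their column, and has size $2$, giving $3m$ eggs in all.

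With this $S$ I would then verify the two bounds separately. For the egg cut bound, every egg has size $2$, so in any egg cut $(X,Y)$ each side contains a $2$-element egg and hence $|X|,|Y|\ge 2=n-1$; Theorem \ref{cutset} then yields $|E(X,Y)|\ge (n-1)m=2m$. For the hitting set bound, a set $T\subseteq V(K_3\square K_m)$ contains no egg precisely when $T$ has at most one vertex in each column, so any egg-free set has size at most $m$. Consequently the complement of any hitting set has size at most $m$, forcing every hitting set to have size at least $3m-m=2m$, a value realized by taking one vertex per column. Combining the two estimates gives $\|S\|=\min\{2m,\ \ge 2m\}=2m$, whence $sn(K_3\square K_m)\ge 2m$ and the lemma follows.

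I expect the only genuine obstacle to be pinning down the correct egg size: the hitting-set requirement pushes toward small eggs, while the egg-cut estimate, which relies on the hypothesis $|A|,|B|\ge n-1$ of Theorem \ref{cutset}, pushes toward eggs large enough that no egg cut can isolate fewer than $n-1$ vertices. Pairs within columns are exactly the balance point, and once that choice is fixed both computations are routine.
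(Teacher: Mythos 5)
Your proof is correct and follows essentially the same route as the paper: reduce via Theorems \ref{gonality} and \ref{scramble_bound} to exhibiting a scramble of order $2m$, take size-two eggs, bound the egg cuts by Theorem \ref{cutset}, and bound hitting sets by a per-column count. The only (inessential) difference is that the paper's scramble uses \emph{all} adjacent pairs as eggs, whereas you use only the pairs lying in a common column; both choices yield the same two bounds.
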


\begin{proof}
Again by Theorems \ref{gonality} and \ref{scramble_bound}, it suffices to find a scramble of order $2n$. Let $S$ be the scramble where the eggs consist of every two adjacent vertices. By Theorem \ref{cutset}, The minimal egg cutset size is $2m$. If $H$ is a hitting set of $S$, then $H$ must contain 2 out of 3 vertices in every column. Then $|H|\geq 2m$ and thus $||S||=2m.$
\end{proof}

Unfortunately, for larger values of $n$, the scramble number of $K_n\square K_m$ does not always match the gonality, and it becomes increasingly more difficult to compute. A good candidate for a maximal scramble on a rook graph is the scramble where the eggs are all connected subsets of size $n-1$. We will refer to this scramble as $S^*_{n,m}$. By Theorem \ref{cutset}, the minimal cutset of $S^*_{n,m}$ is always $(n-1)m=\textrm{gon}(K_n\square K_m)$, so the order of $S^*_{n,m}$ is only less than the gonality if the minimal hitting set is too small. We can find a minimal hitting set indirectly by instead looking for a \emph{maximal avoidance set}, which is the largest subset of vertices that do not contain an entire egg. In the case of $S^*_{n,m}$, we need to avoid all connected subsets of size $n-1$. The complement of a maximal avoidance set is a minimal hitting set. Thus, if a scramble on $K_n\square K_m$ has a maximal avoidance set of size $k$, then it has a minimal hitting set of size $nm-k$.

\begin{example}
We show that $sn(K_4\square K_4)=11.$ Note that this is strictly smaller than gon$(K_4\square K_4)=12.$ 

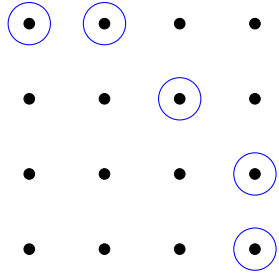
\begin{figure}[ht]
\begin{tikzpicture}
\filldraw (0,0) circle (2pt);
\filldraw (0,1) circle (2pt);
\filldraw (0,2) circle (2pt);
\filldraw (0,3) circle (2pt);
\filldraw (1,0) circle (2pt);
\filldraw (1,1) circle (2pt);
\filldraw (1,2) circle (2pt);
\filldraw (1,3) circle (2pt);
\filldraw (2,0) circle (2pt);
\filldraw (2,1) circle (2pt);
\filldraw (2,2) circle (2pt);
\filldraw (2,3) circle (2pt);
\filldraw (3,0) circle (2pt);
\filldraw (3,1) circle (2pt);
\filldraw (3,2) circle (2pt);
\filldraw (3,3) circle (2pt);
\draw [blue](0,3) circle (8pt);
\draw [blue](1,3) circle (8pt);
\draw [blue](2,2) circle (8pt);
\draw [blue](3,1) circle (8pt);
\draw [blue](3,0) circle (8pt);

\end{tikzpicture}
\caption{A maximal avoidance set of size 5}
\label{4x4}
\end{figure}
In Figure \ref{4x4} we see that the 5 circled vertices do not contain a connected subset of 3 or more vertices. However, any collection of 6 vertices must contain a connected subset of 3 vertices. This is because selecting 6 vertices from 4 columns either requires selecting 3 or more vertices from a single column or selecting 2 vertices from 2 separate columns. If the latter of these cases occurs, then either the 4 selected vertices are already  connected or every row has one vertex and the next vertex selected will result in a connected subset of 3 vertices.

Therefore the 11 vertices not circled in figure \ref{4x4} forms a minimal hitting set for the scramble $S^*_{4,4}$, meaning $||S^*_{4,4}||=11$. We then conclude that $sn(K_n\square K_4)=11$ because any scramble on this graph with a minimal hitting set larger than 11 would require eggs of size 2 or 1, and any such scramble would have a minimum egg cut set that is size 10 or smaller.
\end{example}

\begin{customthm}{1.2} If $m\geq (n-2)(n-1)$, then $sn(K_n\square K_m)=||S^*_{n,m}||=(n-1)m$.\end{customthm}
\begin{proof}
Recall that $S^*_{n,m}$ is the scramble where every connected vertex subset of size $n-1$ is an egg. We show that any set $A$ with $m+1$ vertices cannot be an avoidance set. Given such a set, there must be one column with at least 2 vertices in $A$. Without loss of generality, we say $(1,1)$ and $(2,1)$ are in $A$. Since $(1,1)\sim (2,1)$ all of the vertices in $A$ that are in the first two rows will form a connected component, and therefore $A$ can have at most $n-2$ vertices in the first two columns. This means that $A$ has at least $m+1-(n-2)$ vertices in the remaining $n-2$ rows. Since $m\geq (n-2)(n-1)$, we have $$m+1-(n-2)\geq(n-2)(n-1)-(n-2)+1= (n-2)^2+1,$$ and therefore $A$ must have at least $n-1$ vertices in a single column, meaning that $A$ cannot be an avoidance set. We conclude that any hitting set of $S^*_{n,m}$ must be of size $(n-1)m$ or larger.
\end{proof}
\begin{figure}[ht]
\begin{tikzpicture}
\filldraw (0,0) circle (2pt);
\filldraw (0,1) circle (2pt);
\filldraw (0,2) circle (2pt);
\filldraw (0,3) circle (2pt);
\filldraw (1,0) circle (2pt);
\filldraw (1,1) circle (2pt);
\filldraw (1,2) circle (2pt);
\filldraw (1,3) circle (2pt);
\filldraw (2,0) circle (2pt);
\filldraw (2,1) circle (2pt);
\filldraw (2,2) circle (2pt);
\filldraw (2,3) circle (2pt);
\filldraw (3,0) circle (2pt);
\filldraw (3,1) circle (2pt);
\filldraw (3,2) circle (2pt);
\filldraw (3,3) circle (2pt);
\filldraw (4,0) circle (2pt);
\filldraw (4,1) circle (2pt);
\filldraw (4,2) circle (2pt);
\filldraw (4,3) circle (2pt);
\draw [blue](0,3) circle (8pt);
\draw [blue](1,3) circle (8pt);
\draw [blue](2,2) circle (8pt);
\draw [blue](3,2) circle (8pt);
\draw [blue](4,1) circle (8pt);
\draw [blue](4,0) circle (8pt);

\end{tikzpicture}
\caption{An avoidance set of size 6=$m+1$}
\label{4x5}
\end{figure}
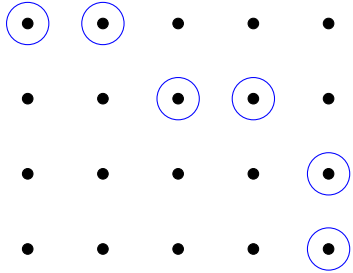
\begin{theorem}
If $m<(n-2)(n-1)$, then $||S^*_{n,m}||< (n-1)m$.
\end{theorem}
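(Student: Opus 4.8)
The plan is to reduce the statement to a purely combinatorial construction. Recall from the discussion preceding the theorem that the minimal egg cut set of $S^*_{n,m}$ has size exactly $(n-1)m$ by Theorem \ref{cutset}, and that a maximal avoidance set of size $k$ corresponds to a minimal hitting set of size $nm-k$. Hence $||S^*_{n,m}|| = \min\{\,nm-k,\ (n-1)m\,\}$, where $k$ is the size of a maximal avoidance set. To prove $||S^*_{n,m}|| < (n-1)m$ it therefore suffices to exhibit a single avoidance set of size $k \ge m+1$, since then the minimal hitting set has size at most $nm-(m+1) = (n-1)m - 1$. So the entire theorem comes down to constructing a vertex subset $A$ of $K_n\square K_m$ with $|A| = m+1$ in which every connected component (in the rook graph) has at most $n-2$ vertices, i.e. contains no egg of $S^*_{n,m}$.

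Before constructing $A$, I would record the structural constraint that makes the construction transparent: if two vertices lie in a common row or a common column they are adjacent, so any two distinct connected components of $A$ must occupy disjoint sets of rows \emph{and} disjoint sets of columns. Thus $A$ is built from ``blocks,'' where the $i$-th block occupies $r_i$ rows and $c_i$ columns with $\sum_i r_i \le n$ and $\sum_i c_i \le m$, and each block may contain at most $\min\{r_i c_i,\ n-2\}$ vertices. The task is to make the total block size reach $m+1$ within the row budget $n$ and the column budget $m$.

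The construction I have in mind generalizes Figure \ref{4x5}. Writing $d=n-2$, I would place $\lfloor m/d\rfloor$ horizontal strips of width $d$, one per row, in disjoint blocks of $d$ consecutive columns; each strip is a single component of size exactly $d$, and collectively they account for $d\lfloor m/d\rfloor$ vertices. The hypothesis $m < (n-2)(n-1) = d(d+1)$ is exactly what guarantees $\lfloor m/d\rfloor \le d$, so these strips use at most $d$ of the $n = d+2$ rows, leaving at least two spare rows. I would then use those spare rows, together with the leftover columns, to stack one additional small component, for instance a vertical segment as in Figure \ref{4x5}, raising the total to $m+1$ while keeping every component of size at most $n-2$ and in rows and columns disjoint from all of the strips. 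When $d\nmid m$ there are $s = m \bmod d \ge 1$ leftover columns, and an $L$-shaped component of size $s+1\le d$ on two spare rows does the job; when $d\mid m$ the strips already consume all $m$ columns, so I would truncate one strip by a single column to free a column and then place a length-two vertical segment there, which still nets $m+1$ vertices.

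The main obstacle is the bookkeeping needed to make the row and column budgets balance simultaneously, and in particular the boundary case $(n-2)\mid m$, where there is no ``free'' leftover column and a strip must be shortened to liberate one. In each case one must verify that the extra component is connected, has size at most $n-2$, and remains non-adjacent (disjoint in both rows and columns) from every strip, and that the total row usage never exceeds $n=d+2$. Once the construction is checked to have $m+1$ vertices with all components of size at most $n-2$, the reduction in the first paragraph immediately yields $||S^*_{n,m}|| \le (n-1)m-1 < (n-1)m$, completing the proof.
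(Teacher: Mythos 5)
Your proposal is correct and takes essentially the same route as the paper's own proof: both reduce the theorem to exhibiting an avoidance set of size $m+1$, build it from disjoint horizontal strips of $n-2$ vertices in distinct rows and columns, and handle the divisible case $(n-2)\mid m$ by shortening one strip and adding a two-vertex vertical segment in the freed column. Your explicit observation that distinct components must occupy disjoint rows and columns, and the $\lfloor m/(n-2)\rfloor \le n-2$ bookkeeping, are just slightly more detailed versions of the paper's argument.
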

\begin{proof}
First, we consider the case where $m= k(n-2)+r$ for some nonzero remainder $1\leq r<n-2$. Then we aim to construct an avoidance set of size $m+1$. In the first $k$ rows, we select $n-2$ vertices per row such that each vertex comes from a unique column. Since $m=k(n-2)+r<(n-2)(n-1)$ we know $k\leq n-2$, and thus there are $r$ columns and at least 2 rows that have no vertices selected. Selecting any set of $r+1$ vertices from these leftover rows and columns will result in an avoidance set of size $m+1$ (see Figure \ref{4x5} for the case $n=4, m=5, k=2, r=1)$.

Next, we consider the case where $m=k(n-2)$ for some $k\leq n-2$. We select $n-2$ vertices from each of the first $k-1$ rows, and $n-3$ vertices from row $k$ such that each vertex is from a unique column. From the last remaining column that has not yet had a vertex selected, we select two from rows $k+1,\dots, n$. The selected vertices then form an avoidance set of size $m+1$. 

 \end{proof}

In cases where $m$ and $n$ are relatively close in size, $S^*_{n,m}$ might not be optimal. That is, we can find a scramble of larger order on that graph. This makes computing the scramble number of general rook graphs increasingly difficult as $m$ and $n$ get large.

\begin{example}
Given the graph $K_6\square K_6$, we first consider the scramble $S^*_{6,6}$ consisting of all possible eggs of size 5. This scramble has a hitting set of size 24 since we can construct an avoidance set of size 12 pictured in Figure \ref{6x6}. However, if we create a new scramble $T^*$ by augmenting the egg set of $S^*_{6,6}$ to also include all 4-vertex squares, we increase the minimum hitting set to size 27 without decreasing the minimum cut set size. One could also include ''S" and ''Z" shaped 4-vertex subsets into the egg set without diminishing the size of the minimal cut-set, however, this will not increase the order of the scramble since Figure \ref{6x6,2} already avoids such eggs. Including any other eggs of size $\leq4$ would decrease the minimal cut set to values less than 27. Thus $sn(K_6\square K_6)=27$. 

We note that $K_6\square K_6$ is the smallest rook graph whose scramble number is strictly greater than the order of $S^*_{n,m}$. Theorem \ref{scramble} shows that for $n\leq 5$, $sn(K_n\square K_m)=||S^*_{n,m}||$ for all but finitely many values of $m$. These cases can be checked by hand.
\end{example}

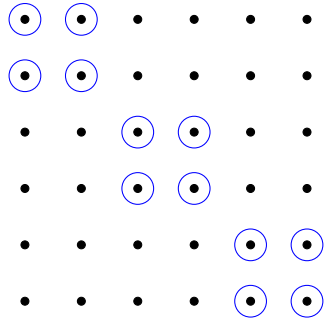
\begin{figure}[ht]
\begin{tikzpicture}[scale=0.75]
\filldraw (0,0) circle (2pt);
\filldraw (0,1) circle (2pt);
\filldraw (0,2) circle (2pt);
\filldraw (0,3) circle (2pt);
\filldraw (0,4) circle (2pt);
\filldraw (0,5) circle (2pt);
\filldraw (1,0) circle (2pt);
\filldraw (1,1) circle (2pt);
\filldraw (1,2) circle (2pt);
\filldraw (1,3) circle (2pt);
\filldraw (1,4) circle (2pt);
\filldraw (1,5) circle (2pt);
\filldraw (2,0) circle (2pt);
\filldraw (2,1) circle (2pt);
\filldraw (2,2) circle (2pt);
\filldraw (2,3) circle (2pt);
\filldraw (2,4) circle (2pt);
\filldraw (2,5) circle (2pt);
\filldraw (3,0) circle (2pt);
\filldraw (3,1) circle (2pt);
\filldraw (3,2) circle (2pt);
\filldraw (3,3) circle (2pt);
\filldraw (3,4) circle (2pt);
\filldraw (3,5) circle (2pt);
\filldraw (4,0) circle (2pt);
\filldraw (4,1) circle (2pt);
\filldraw (4,2) circle (2pt);
\filldraw (4,3) circle (2pt);
\filldraw (4,4) circle (2pt);
\filldraw (4,5) circle (2pt);
\filldraw (5,0) circle (2pt);
\filldraw (5,1) circle (2pt);
\filldraw (5,2) circle (2pt);
\filldraw (5,3) circle (2pt);
\filldraw (5,4) circle (2pt);
\filldraw (5,5) circle (2pt);
\draw [blue](0,5) circle (8pt);
\draw [blue](1,5) circle (8pt);
\draw [blue](0,4) circle (8pt);
\draw [blue](1,4) circle (8pt);
\draw [blue](2,3) circle (8pt);
\draw [blue](3,3) circle (8pt);
\draw [blue](2,2) circle (8pt);
\draw [blue](3,2) circle (8pt);
\draw [blue](4,1) circle (8pt);
\draw [blue](5,1) circle (8pt);
\draw [blue](4,0) circle (8pt);
\draw [blue](5,0) circle (8pt);

\end{tikzpicture}
\caption{A maximal avoidance set of size 12 of the scramble $S^*_{6,6}$}
\label{6x6}
\end{figure}

\begin{figure}[ht]
\begin{tikzpicture}[scale=0.75]
\filldraw (0,0) circle (2pt);
\filldraw (0,1) circle (2pt);
\filldraw (0,2) circle (2pt);
\filldraw (0,3) circle (2pt);
\filldraw (0,4) circle (2pt);
\filldraw (0,5) circle (2pt);
\filldraw (1,0) circle (2pt);
\filldraw (1,1) circle (2pt);
\filldraw (1,2) circle (2pt);
\filldraw (1,3) circle (2pt);
\filldraw (1,4) circle (2pt);
\filldraw (1,5) circle (2pt);
\filldraw (2,0) circle (2pt);
\filldraw (2,1) circle (2pt);
\filldraw (2,2) circle (2pt);
\filldraw (2,3) circle (2pt);
\filldraw (2,4) circle (2pt);
\filldraw (2,5) circle (2pt);
\filldraw (3,0) circle (2pt);
\filldraw (3,1) circle (2pt);
\filldraw (3,2) circle (2pt);
\filldraw (3,3) circle (2pt);
\filldraw (3,4) circle (2pt);
\filldraw (3,5) circle (2pt);
\filldraw (4,0) circle (2pt);
\filldraw (4,1) circle (2pt);
\filldraw (4,2) circle (2pt);
\filldraw (4,3) circle (2pt);
\filldraw (4,4) circle (2pt);
\filldraw (4,5) circle (2pt);
\filldraw (5,0) circle (2pt);
\filldraw (5,1) circle (2pt);
\filldraw (5,2) circle (2pt);
\filldraw (5,3) circle (2pt);
\filldraw (5,4) circle (2pt);
\filldraw (5,5) circle (2pt);
\draw [blue](0,5) circle (8pt);
\draw [blue](1,5) circle (8pt);
\draw [blue](1,4) circle (8pt);
\draw [blue](2,3) circle (8pt);
\draw [blue](3,3) circle (8pt);
\draw [blue](3,2) circle (8pt);
\draw [blue](4,1) circle (8pt);
\draw [blue](5,1) circle (8pt);
\draw [blue](5,0) circle (8pt);

\end{tikzpicture}
\caption{A maximal avoidance set of size 9 of the scramble $T^*$} 
\label{6x6,2}
\end{figure}
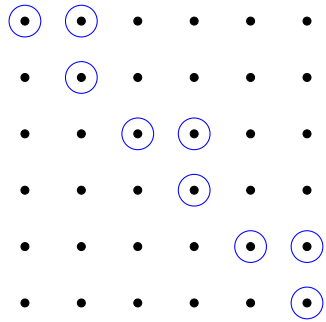

\section{Three Dimensional Rook Graphs} 
Up until this point we have only been concerned with the Cartesian product of two complete graphs. However, if we remember that rook graphs are dual graphs to certain degenerations of complete intersection curves, one might hope to complete the discrete analogue of Lazarsfeld's result on complete intersection curves. Namely, we should expect the following conjecture to be true.
\begin{conjecture}
If $n_1\leq n_2\leq,\dots,\leq n_k$, then gon$(K_{n_1}\square K_{n_2}\square\cdots\square K_{n_k})=(n_1-1)n_2\cdots n_k$. 
\end{conjecture}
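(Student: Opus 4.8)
The plan is to follow the architecture of the proof of Theorem~\ref{gonality}, using the smallest factor $K_{n_1}$ in place of the ``column'' direction. Write $N=n_2n_3\cdots n_k$, so the claimed gonality is $(n_1-1)N$, and set $G'=K_{n_2}\square\cdots\square K_{n_k}$. Partition $V(G)$, where $G=K_{n_1}\square\cdots\square K_{n_k}$, into the $N$ \emph{fibers} $F_{\mathbf w}=\{(i,\mathbf w):i\in V(K_{n_1})\}$, each a copy of $K_{n_1}$, and into the $n_1$ \emph{slabs} $S_i$, each a copy of $G'$. For the upper bound I would take the divisor that places one chip on every vertex except those of a single slab $S_0$, which is left empty; its degree is $(n_1-1)N$. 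Firing all of $S_1,\dots,S_{n_1-1}$ moves the empty slab onto $S_0$ and loads each vertex of $S_0$ with exactly $n_1-1\ge 1$ chips, so a chip can be delivered to any target vertex; hence the divisor has positive rank, exactly as in the two–dimensional construction.

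For the lower bound I must show that every effective divisor $D$ of degree $(n_1-1)N-1$ has rank $0$. Since there are $N$ fibers and $\deg D<(n_1-1)N$, some fiber carries at most $n_1-2$ chips, and I may assume as in Theorem~\ref{gonality} that $D$ maximizes the number of chips in a poorest fiber among all equivalent effective divisors. Starting Dhar's fire at an empty vertex of a poorest fiber burns that entire fiber by Lemma~\ref{poorest row} applied to its copy of $K_{n_1}$. Suppose the fire leaves a nonempty unburnt set $U$, and fire $U$ to obtain an effective $D'$. If one knew that every fiber contained a burnt vertex and that every poorest fiber strictly gains a chip, the proof would close: a fiber meeting $U$ in $1\le j\le n_1-1$ vertices ends with at least $j(n_1-j)\ge n_1-1$ chips, exceeding the old poorest value $\le n_1-2$, and every fiber would carry strictly more chips in $D'$, contradicting maximality, so $D$ is reduced at an empty vertex and Lemma~\ref{rank} gives rank $0$.

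The entire difficulty is concentrated in those two hypotheses, both of which are automatic in two dimensions only because the slab is then a complete graph. First, to guarantee a burnt vertex in every fiber one wants an entire slab to burn; but a slab is the rook graph $G'$, whose minimum cut $(n_2-1)+\cdots+(n_k-1)$ is far below $N-2$, so an effective divisor of degree $\le N-2$ concentrated near one vertex can halt the fire inside a slab, and no single slab need burn. Second, a poorest fiber disjoint from $U$ need not gain any chips at all: contracting each $K_{n_1}$–fiber to a point turns $G$ into a copy of $G'$ with every edge of multiplicity $n_1$, so two distinct fibers $F_{\mathbf w},F_{\mathbf w'}$ are joined by edges precisely when $\mathbf w\sim_{G'}\mathbf w'$ and by none otherwise. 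Because $G'$ is not complete this adjacency is sparse, the transversality that forced each disjoint column to gain $|U|$ chips in the planar case is lost, and the naive contradiction evaporates. Both failures trace to the same source: the slab direction is itself a nontrivial rook graph rather than a complete graph, and the fiber contraction shows the problem to be genuinely self-similar.

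This self-similarity is what I would exploit, arguing by induction on the number of factors $k$, with Theorem~\ref{gonality} (and $\mathrm{gon}(K_{n_1})=n_1-1$) as base cases. The inductive hypothesis must be strengthened from a bare statement about gonality to one about burning: that for a suitably extremal reduced representative the fire on a $(k-1)$–dimensional slab spreads to meet every fiber of the next coordinate. The new ingredient unavailable to the isolated $G'$ is that, once other slabs begin to burn, each vertex of a slab receives up to $n_1-1$ additional burning edges along its cross–slab fiber, and these should compensate for the weak minimum cut of $G'$; the contracted picture, a copy of $G'$ with edge multiplicity $n_1$, is the right bookkeeping device for tracking them. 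The crux of the conjecture, and the step I expect to be the main obstacle, is coordinating the choice of reduced representative in two coordinate directions at once, so that the extremal property driving the within–slab fire is not destroyed by the cross–slab firings used to ignite neighbouring slabs. Controlling this interaction between firings in different directions is exactly what the two–dimensional argument never has to confront, and it is where a full proof will have to do genuinely new work.
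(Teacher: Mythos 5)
You have not proved this statement, and you should know that neither does the paper: it appears there as a conjecture, accompanied only by the observation that $(n_1-1)n_2\cdots n_k$ is an upper bound and the explicit admission that it is not clear the proof of Theorem \ref{gonality} can be generalized to higher dimensions. Your upper-bound paragraph is correct and coincides with the paper's remark: the divisor with one chip on each vertex outside a single slab $S_0\cong K_{n_2}\square\cdots\square K_{n_k}$ has degree $(n_1-1)n_2\cdots n_k$, and firing the complement of $S_0$ loads every vertex of $S_0$ with $n_1-1\geq 1$ chips, so the divisor has positive rank. The genuine gap is everything else. The lower bound --- that no divisor of degree $(n_1-1)n_2\cdots n_k-1$ has positive rank --- is never established. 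Your two failure modes are correctly identified (Dhar's fire need not consume any slab, because a slab is itself a rook graph whose minimum cut $(n_2-1)+\cdots+(n_k-1)$ is far smaller than $N-2$; and a poorest fiber disjoint from the unburnt set $U$ need not gain chips when $U$ is fired, because fiber-adjacency in the contracted multigraph is sparse), but a diagnosis of the obstruction is not a proof, and the inductive scheme in your final paragraph is left entirely unexecuted, as you yourself concede. As written, this is a research plan whose hardest step --- coordinating the choice of reduced representative across two coordinate directions --- is exactly the open problem.

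For comparison, the partial progress the paper actually has on this conjecture comes from a different tool: the scramble number. Using Theorem \ref{scramble_bound} ($sn(G)\leq \mathrm{gon}(G)$), the paper constructs a scramble of order $nm$ on $K_2\square K_n\square K_m$ whose eggs are all connected $n$-vertex subsets, bounding egg cuts via Theorem \ref{cutset} and hitting sets via a pigeonhole argument; this settles the conjecture in the case $k=3$, $n_1=2$. Note, however, that the paper also proves $sn(K_n\square K_n\square K_n)<(n-1)n^2$ for $n\geq 3$, so scrambles alone cannot resolve the conjecture in general, and for the remaining cases something like the cross-slab burning analysis you propose --- or a genuinely new lower-bound technique --- is unavoidable. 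If you want to extract a provable statement from your write-up, the $n_1=2$, $k=3$ case via scrambles is the one within reach, and it requires abandoning, not repairing, the Dhar-type argument.
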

This expected gonality is in fact an upper bound since the divisor on $K_{n_1}\square K_{n_2}\square\cdots\square K_{n_k}$ in which one copy of $K_{n_2}\square\cdots\square K_{n_k}$ has no chips and all other vertices have 1 chip, has rank 1. It is rather difficult to show that there is no rank 1 divisor with smaller degree. Unfortunately, it is not clear that the proof of Theorem \ref{gonality} can be generalized for higher dimensions. However, there are some cases where we can use the scramble number to compute the gonality of some three dimensional rook graphs.
\begin{customthm}{1.3}
Let $2\leq n\leq m$, then $sn(K_2\square K_n\square K_m)=$ gon$(K_2\square K_n\square K_m)=nm$
\end{customthm}
\begin{proof}
Consider the scramble $S$ with an egg set consisting of all connected subsets of $n$ vertices. It suffices to show that $||S||=nm$.

We first establish that given any cut $A\coprod B$ such that $|A|, |B|\geq n$ will have $|E(A,B)|\geq nm$. Much of this argument is similar to the proof of Theorem \ref{cutset}. We begin by considering the case where either $A$ or $B$ has at most $m$ vertices. Assume without loss of generality that $n\leq |A|\leq m$. Every vertex has degree $n+m-1$, and thus 
$$|E(A,B)|=(n+m-1)|A|-2k,$$
where $k$ is the number of edges with both ends in $A$. The value $k$ reaches a maximum value when all vertices of $A$ are in a single copy of $K_m$. In this case, we would have $$|E(A,B)|= |A|(m-|A|)+|A|n,$$ 
where $|A|(m-|A|)$ represents the cut edges contained within the single copy of $K_m$, and $|A|n$ represents the number of cut edges between two vertices contained in the same copies of $K_n$ or $K_2$.  
$$|A|(m-|A|)+|A|n= |A|(m+n-|A|).$$
This product is minimized at the boundary cases when $|A|=n$ or $|A|=m$, and the resulting product is $nm$.

Next, we show that the proposition holds if there is at least one copy of $K_2\square K_n$ with all vertices in $A$ and another copy of $K_2\square K_n$ with all vertices in $B$. Without loss of generality, say the first copy of $K_2\square K_n$ is in $A$ and the second is in $B$. Between these two copies of $K_2\square K_n$ there will be $2n$ cut edges. Additionally, for every vertex in the remaining $m-2$ copies of $K-2\square K_n$ there will be a cut edge between that vertex and one of the first two copies of $K_2\square K_n$. Thus we have a minimum of $2n(m-1)=2nm-2n$ cut edges. Since $2\leq n\leq m,$ we have $2n\leq nm$, and thus $$2nm-2n\geq 2nm-nm=nm$$

Finally, we assume there are no copies of $K_2\square K_n$ that only have vertices in $B$ and there are $i$ copies of $K_2\square K_n$ that contain only vertices in $A$, where $0\leq i\leq m-1$. because of our first case, we can assume $|B|\geq m+1$ and therefore there are at least $i(m+1)$ cut edges between the vertices in $B$ and the $i$ copies of $K_2\square K_n$ in $A$. for each of the remaining $m-1$ copies of $K_2\square K_n$ that contain at least one vertex in $A$ and $B$, by Theorem \ref{cutset}, there are at least $n$ cut edges contained in that copy. Therefore 
$$|E(A,B)|\geq n(m-i)+ i(m+1)= nm +i(m+1)-in=nm+i(m+1-n).$$
Since $n\leq m$, we know $(m+1-n)$ is a positive integer and therefore $|E(A,B)|\geq nm.$ This establishes the minimum egg cut set of the scramble $S$ to be $nm$.

Next, one needs to show that the minimum hitting set of $K_2\square K_n\square K_m$ is at least $nm$. To do this we assume that $A\subset V(K_2\square K_n\square K_m)$ only contains $nm-1$ vertices, and then show that it misses some egg in $S$. 

Select the copy of $K_2\square K_n$ that contains the fewest vertices in $A$. This copy must contain fewer than $n$ vertices in $A$ so by the pigeon hole principle, there is some copy of $K_2$ in which neither vertex is contained in $A$. Therefore all vertices in this copy of $K_2\square K_n$ which are not in $A$ are connected. Thus, there are at least $n+1$ connected vertices not contained in $A$, meaning $A$ cannot be a hitting set of $S$. Therefore $||S||=nm$

\end{proof}
While it is certainly possible there are other three dimensional rook graphs which have scramble number equal to the expected gonality, we also have the following result.

\begin{theorem}
If $n\geq 3$, then $sn(K_n\square K_n\square K_n)$ is strictly less than $(n-1)n^2.$
\end{theorem}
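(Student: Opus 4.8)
The plan is to argue by contradiction using the identity $||S||=\min\{h(S),c(S)\}$, where $h(S)$ is the size of a minimum hitting set and $c(S)$ the size of a minimum egg cut. Suppose some scramble $S$ on $K_n\square K_n\square K_n$ had order at least $(n-1)n^2$. Then both $h(S)\ge (n-1)n^2$ and $c(S)\ge (n-1)n^2$. The first inequality says every maximal avoidance set has at most $n^2$ vertices; equivalently, \emph{every} set of at least $n^2+1$ vertices contains an egg. The second says no bipartition whose two sides each contain an egg has fewer than $(n-1)n^2$ crossing edges. My goal would be to derive a contradiction from the conjunction of these two conditions, and the natural target is to produce an egg cut of size strictly less than $(n-1)n^2$, or equivalently an avoidance set of size at least $n^2+1$.

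First I would record the relevant edge-isoperimetry of the cube. Each vertex has degree $3(n-1)$, and the basic \emph{slab} cut isolating one copy of $K_n\square K_n$ has exactly $(n-1)n^2$ crossing edges; a short optimization over sub-boxes (whose boundary is $abc(3n-a-b-c)$ for a box $[a]\times[b]\times[c]$) shows this is the cheapest way to separate a set of size $n^2$ from its complement, so a naive balanced cut will not beat $(n-1)n^2$. The crucial point in the other direction is that \emph{compact} connected sets are far cheaper to cut off: a single coordinate line has boundary only $2n(n-1)$, and any connected set of size $s$ has edge-boundary at most $3(n-1)s-2(s-1)$, which is strictly below $(n-1)n^2$ when $s$ is small and $n\ge 3$. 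Consequently, if such a low-boundary set $W$ happened to contain an egg, then since all avoidance sets are small its complement $W^c$ must also contain an egg, so $(W,W^c)$ would be an egg cut of size $<(n-1)n^2$, contradicting $c(S)\ge (n-1)n^2$. Running this over all low-boundary shapes forces every egg of $S$ to be large and spread out: each egg must either have at least $(n-1)n^2$ vertices or have edge-boundary at least $(n-1)n^2$, and in particular no egg fits inside a single line.

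The endgame is to convert this rigidity into an avoidance set of size $n^2+1$, contradicting $h(S)\ge (n-1)n^2$; note that since $n^2+1<(n-1)n^2$ for $n\ge 3$, such a set is automatically too small to contain any of the large eggs, so only the boundary-heavy eggs need to be dodged. This is exactly the step that succeeds for $K_2\square K_n\square K_m$ in Theorem 1.3 and should fail for the cube: there one exploits that each $K_2$-fiber has only two vertices, so a deficient slab is forced by pigeonhole to contain a connected egg, whereas the cube has no two-element fiber and $n\ge 3$ leaves enough room to thread a set of $n^2+1$ vertices -- for instance a slab with one row replaced by a staircase reaching into a neighbouring slab -- that meets every slab and every line in too few vertices to capture a full egg. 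I expect the main obstacle to be precisely this construction: turning the qualitative statement that eggs are large and spread into a quantitative egg-free set of size $n^2+1$ (or directly into a sub-$(n-1)n^2$ egg cut). Because the slab cut already realizes the value $(n-1)n^2$ exactly, strictness lives entirely in this finer combinatorial margin, and I anticipate needing a shape lemma describing the optimal avoidance configurations of the cube in order to push the count from $n^2$ to $n^2+1$.
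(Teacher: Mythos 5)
Your high-level framework is sound, and it is in fact the same dichotomy the paper exploits: if $||S||\geq (n-1)n^2$, then every set of size at least $n^2+1$ must contain an egg, while any connected egg confined to a low-boundary set $W$ would produce an egg cut $(W,W^c)$ of size $<(n-1)n^2$ (and you correctly note $W^c$ contains an egg because it is too large to be an avoidance set). Your isoperimetric facts are also correct: degree $3(n-1)$, box boundary $abc(3n-a-b-c)$, line boundary $2n(n-1)$, and the bound $3(n-1)s-2(s-1)$ for connected sets of size $s$. However, the proof is not complete: the entire content of the theorem lives in the step you defer, namely exhibiting an explicit set of size at least $n^2+1$ none of whose connected subsets can be an egg. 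You acknowledge this (``I anticipate needing a shape lemma''), so what you have is a correct reduction, not a proof.

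Worse, the one construction you sketch --- a slab with one row replaced by a staircase into the neighbouring slab --- fails. Your rigidity argument only excludes eggs contained in sets of edge-boundary \emph{strictly} less than $(n-1)n^2$, and slab-like sets sit exactly at that threshold: a full slab $K_n\square K_n$ has boundary exactly $(n-1)n^2$, and so does a slab minus a row, which has $n(n-1)$ vertices and boundary $n(n-1)+n(n-1)^2=(n-1)n^2$. So your candidate set contains a large connected piece that your argument cannot prevent from being an egg, and even if that piece were an egg, the cut it induces has size exactly $(n-1)n^2$, not strictly less; strictness is lost precisely where you predicted it would be delicate. The paper's resolution is to build the avoidance set from pieces so small that strictness is automatic: $A=\{(1,1,k)\,|\,2\leq k\leq n\}\cup\{(1,j,1)\,|\,2\leq j\leq n\}\cup\{(i,k,k)\,|\,2\leq i\leq n,\ 1\leq k\leq n\}$, a union of $n+2$ pairwise non-adjacent line segments of $n-1$ vertices each, so $|A|=(n+2)(n-1)\geq n^2+1$ for $n\geq 3$. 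Every connected subset of $A$ lies in a single coordinate line and has $i\leq n-1$ vertices, hence boundary $i(3n-i-2)\leq (n-1)(2n-1)<(n-1)n^2$, which supplies exactly the strict inequality your approach needs. Your reduction becomes a proof once the staircase idea is replaced by a construction of this kind.
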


\begin{proof}
Consider the following set $A\subset V(K_n\square K_n\square K_n)$:
$$A=\{(1,1,k)|2\leq k\leq n\}\cup \{(1,j,1)|2\leq j\leq n\}\cup \{(i,k,k)|2\leq i\leq n, 1\leq k\leq n\}$$

The set $A$ contains $n+2$ different connected components, each component having $n-1$ vertices. Since $$|A|=(n+2)(n-1)=n^2+n-2\geq n^2+1,$$ we have $|A^c|\leq (n-1)n^2-1$. Furthermore, $|A^c|$ will intersect with every connected vertex subset of size at least $n$. If $S$ is a scramble on $K_n\square K_n\square K_n$, then either, $A^c$ is a hitting set of $S$, or $S$ has an egg contained in one of the connected components of $A$. If the former is true then $||S||\leq |A^c|\leq (n-1)n^2-1$. If the latter is true, then $S$ has an egg $E$, such that $E$ has $n-1$ or fewer vertices, and every vertex in $E$ is adjacent to one another. If $E$ has $i$ vertices, then the egg cut set $E(E,E^c)$ will consist of $i(n-i)+i(2n-2)$ edges. This is because $i(n-i)$ counts the number of edges between the $i$ vertices in $E$ and the $n-i$ vertices not in $E$, but in the same line as $E$. Each vertex in $E$ is also adjacent to $2n-2$ other vertices which are not in the same line as $E$. Then we use the fact that $n\geq 3$ and $i\leq n-1$ to get $$i(n-i)+i(2n-2)=i(3n-i-2)\leq i(n^2-i-2)\leq (n-1)(n^2-i-2)<(n-i)n^2.$$
Thus $S$ will have either a hitting set or an egg cut set smaller than $(n-1)n^2$. 
\end{proof}
\begin{figure}
\begin{tikzpicture}[scale=1.4]
\filldraw (0,0,0) circle (2pt);
\filldraw [blue] (0,1,0) circle (4pt);
\filldraw [blue] (0,2,0) circle (4pt);
\filldraw (1,0,0) circle (2pt);
\filldraw (1,1,0) circle (2pt);
\filldraw (1,2,0) circle (2pt);
\filldraw (2,0,0) circle (2pt);
\filldraw (2,1,0) circle (2pt);
\filldraw (2,2,0) circle (2pt);
\filldraw [blue] (0,0,1) circle (4pt);
\filldraw (0,1,1) circle (2pt);
\filldraw (0,2,1) circle (2pt);
\filldraw (1,0,1) circle (2pt);
\filldraw [blue] (1,1,1) circle (4pt);
\filldraw (1,2,1) circle (2pt);
\filldraw (2,0,1) circle (2pt);
\filldraw [blue] (2,1,1) circle (4pt);
\filldraw (2,2,1) circle (2pt);
\filldraw [blue] (0,0,2) circle (4pt);
\filldraw (0,1,2) circle (2pt);
\filldraw (0,2,2) circle (2pt);
\filldraw (1,0,2) circle (2pt);
\filldraw (1,1,2) circle (2pt);
\filldraw [blue] (1,2,2) circle (4pt);
\filldraw (2,0,2) circle (2pt);
\filldraw (2,1,2) circle (2pt);
\filldraw [blue] (2,2,2) circle (4pt);
\draw [dashed] (0,0,0)--(2,0,0);
\draw [dashed] (0,0,0)--(0,2,0);
\draw [dashed] (0,0,0)--(0,0,2);
\draw (2,0,0)--(2,2,0);
\draw (2,0,0)--(2,0,2);
\draw (0,2,0)--(0,2,2);
\draw (0,2,2)--(2,2,2);
\draw (2,0,2)--(2,2,2);
\draw (2,2,0)--(2,2,2);
\draw (0,0,2)--(0,2,2);
\draw (0,0,2)--(2,0,2);
\draw (0,2,0)--(2,2,0);
\draw (0,2,1)--(2,2,1);
\draw (1,2,2)--(1,2,0);
\draw (0,1,2)--(2,1,2);
\draw [dashed] (0,1,1)--(2,1,1);
\draw [dashed] (0,1,0)--(2,1,0);
\draw [dashed] (0,1,0)--(0,1,2);
\draw [dashed] (1,1,0)--(1,1,2);
\draw (2,1,0)--(2,1,2);
\draw [dashed] (0,0,1)--(0,2,1);
\draw [dashed] (0,0,1)--(2,0,1);
\draw [dashed] (1,0,0)--(1,0,2);
\draw (2,0,1)--(2,2,1);
\draw (1,0,2)--(1,2,2);
\draw [dashed] (1,0,1)--(1,2,1);
\draw [dashed] (1,0,0)--(1,2,0);

\end{tikzpicture}
\caption{The set $A$ for  $K_3\square K_3\square K_3$ as described in the proof of Theorem 6.3}

\end{figure}
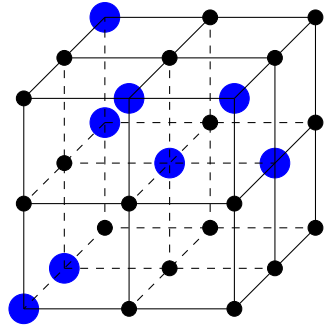
\section{Higher Gonalities of Two Dimensional Rook Graphs}

Recall that the gonality of a graph $G$ is the fewest number of chips needed to construct a divisor of rank 1. In other words, if gon$(G)=j$ then there is some divisor $D$ of degree $j$ such that if someone were to take one chip from any vertex, we could get back to an effective divisor through a series of chip fires. We can expand on this idea by asking how many chips we need to make a divisor that can withstand the theft of more than just one chip.

\begin{definition}
Given a graph $G$, the $k-$\emph{gonality} of $G$, denoted gon$_k(G)$ is the fewest number of chips needed to construct a divisor of rank $k$.
\end{definition}

\begin{lemma}
Given a graph $G$, gon$_k(G)\leq$ gon$_{k+1}(G)-1$.
\end{lemma}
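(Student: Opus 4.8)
The plan is to take a divisor witnessing $\text{gon}_{k+1}(G)$, remove a single chip from it, and argue that the result still has rank at least $k$. First I would let $D$ be a divisor of minimal degree among those of rank at least $k+1$, so that $\deg(D) = \text{gon}_{k+1}(G)$ and $D$ has rank at least $k+1$. I then fix an arbitrary vertex $v \in V(G)$ and set $D' = D - v$, the divisor obtained by removing one chip from $v$. By construction $\deg(D') = \text{gon}_{k+1}(G) - 1$, so it suffices to show that $D'$ has rank at least $k$.

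The heart of the argument is to verify $r(D') \geq k$ directly from the definition of rank. Let $E$ be any effective divisor of degree $k$. Then $E + v$ is effective of degree $k+1$, and since $D$ has rank at least $k+1$, the divisor $D - (E + v)$ is equivalent to an effective divisor. Observing that $D - (E + v) = D' - E$, we conclude that $D' - E$ is equivalent to an effective divisor. Because $E$ was an arbitrary effective divisor of degree $k$, this is precisely the statement that $r(D') \geq k$.

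With this in hand, $D'$ is a divisor of rank at least $k$ and degree $\text{gon}_{k+1}(G) - 1$, so by the definition of $k$-gonality as the fewest chips needed for a divisor of rank $k$, we obtain $\text{gon}_k(G) \leq \deg(D') = \text{gon}_{k+1}(G) - 1$, as desired.

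The step I expect to require the most care is the rank computation for $D'$, since one must resist the temptation to reason about chip fires explicitly and instead lean entirely on the ``for every effective $E$'' formulation of rank. The small trick that makes it work is absorbing the removed chip at $v$ into the test divisor by replacing $E$ with $E + v$; this converts a degree-$k$ test for $D'$ into a degree-$(k+1)$ test for $D$, where the rank hypothesis on $D$ applies. Everything else is bookkeeping on degrees, and the strict inequality is automatic once a rank-$k$ divisor of degree $\text{gon}_{k+1}(G) - 1$ is exhibited.
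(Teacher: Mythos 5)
Your proof is correct and follows essentially the same approach as the paper: take a minimal-degree divisor of rank $k+1$ and remove one chip, observing that the result has rank at least $k$. The only difference is that you fill in the rank verification (replacing the test divisor $E$ by $E+v$) that the paper asserts without proof, and your version works for an arbitrary vertex $v$ rather than one carrying a positive number of chips.
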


\begin{proof}
Let $D$ be a divisor on $G$ of degree gon$_{k+1}(G)$, and having rank $k+1$. If we take away a single chip from any vertex which has a positive number of chips, we are left with a divisor of degree gon$_{k+1}(G)-1$ and rank $k$. Therefore gon$_k(G)\leq$ gon$_{k+1}(G)-1$.
\end{proof}

With this lemma, we have enough information to modify the proof of \ref{gonality} in order to compute the 2 and 3 gonalities of two dimensional rook graphs
\begin{customthm}{1.4}
Given the rook graph $K_n\square K_m$, gon$_2(K_n\square K_m)=nm-1$ and gon$_3(K_n\square K_m)=nm$.
\end{customthm}

\begin{proof}
There are two main components to this proof. First, we will show that there exists a divisor of degree $nm$ of rank at least 3. Second, we will show that no divisor of degree $nm-2$ has rank 2. 

Given a graph $G$, one can show that the divisor $D$ has positive rank if for every vertex $v\in V(G)$, there is some equivalent effective divisor $D'$, where $v$ has a positive number of chips. Similarly, $D$ has rank at least $k$ if we can take away any $k-1$ chips and still be left with a divisor such that for any vertex $v$, there is some equivalent effective divisor where $v$ has a positive number of chips. Let $D$ be the divisor on $K_n\square K_m$ in which every vertex has exactly 1 chip. We then consider all possible ways to take away 2 chips from $D$.
\begin{itemize}
 
\item If 2 chips are taken from a single vertex $v$, we can fire all vertices except $v$. Then $v$ will have $n+m-3$ chips and all other vertices have either 1 or 0 chips. Assuming $n,m\geq 2$, $v$ will have a positive number of chips.

\item If chips are taken from $v_1$ and $v_2$ which lie in the same row or column, we can fire all of the other rows or columns to produce an effective divisor where both $v_1$ and $v_2$ have a positive number of chips.

\item If chips are taken from $v_1$ and $v_2$ which lie in different rows and columns, we can fire all vertices except $v_1$ to produce an effective divisor where $v_1$ has a positive number of chips. Similarly, we can fire all vertices except $v_2$ to produce a different effective divisor where $v_2$ has a positive number of chips.

\end{itemize} 
Therefore $D$ has rank at least 3 and gon$_3(K_n\square k_m)\leq nm$.

Next, we let $E$ be an effective divisor of degree $nm-2$. We assume $E$ has a maximal number of chips in its poorest column. First, we consider the case where the poorest columns have $n-1$ chips. We begin by selecting one of the poorest columns, and removing a chip from a vertex that has a positive number of chips. By Lemma \ref{poorest row}, starting a fire on any vertex in this column with no chips will result in the entire column burning. Now that an entire column is burning, every other column with n-1 chips must also burn due to the same argument as in the proof of Lemma \ref{poorest row}. We can understand this by thinking of the original burning column as a singular burning vertex that is adjacent to every vertex in a given column. This then reduces to the case where we have the graph $K_{n+1}$ with only $n-1$ chips.

Since $E$ is degree $nm-2$, and the poorest column has $n-1$ chips, if a column has $n+i$ chips for $i\geq 0$, then there must be at least $i+2$ other columns with $n-1$ chips. All of these $i+2$ columns with $n-1$ chips will burn, which will lead to the column with $n+i$ chips burning as well. Therefore the entire graph burns and $E$ does not have rank 2. 

Next, we assume that $E$ has a poorest column with $\leq n-2$ chips. We remove a chip from a poorest row, which would have at most $m-1$ chips. We then start a fire on some vertex with zero chips in that row. This row now has at most $m-2$ chips, and thus it must burn entirely. Any column with $\leq n-2$ chips will also burn. We then assume for contradiction that some subset $U\subset V(K_n\square K_m)$ will be left unburnt. If we fire the vertices in $U$ to produce the new divisor $E'$, we know from the proof of Theorem \ref{gonality} that any column which intersects with $U$ will have at least $n-1$ chips in $E'$. Every other column will increase the number of chips it has by $|U|$. This contradicts the fact that $E$ maximized the chips in the poorest column, and the poorest column had $\leq n-2$ chips. Therefore the entire graph must burn and $E$ does not have rank 2. Thus we conclude that gon$_2(K_n\square K_m)=nm-1$ and gon$_3(K_n\square K_m)=nm$.
\end{proof}
\clearpage
\bibliographystyle{alpha}
\bibliography{ref}
\end{document}